\newtheorem{theorem}{Theorem}[section]
\newtheorem{lemma}[theorem]{Lemma}
\newtheorem{proposition}[theorem]{Proposition}
\newtheorem{remark}[theorem]{Remark}
\newcommand{\R}{\mathbb{R}}
\newcommand{\N}{\mathbb{N}}
\newcommand{\Z}{\mathbb{Z}}
\newcommand{\cC}{{\mathcal C}}
\newcommand{\cF}{{\mathcal F}}
\newcommand{\cR}{{\mathcal R}}   
\newcommand{\cS}{{\mathcal S}}
\newcommand{\cW}{{\mathcal W}}
\newcommand{\mR}{{\mathbf R}}
\newcommand{\mK}{{\mathbf K}}
\newcommand{\dist}{\operatorname{dist}}
\newcommand{\supp}{\operatorname{supp}}
\newcommand{\weakto}{\rightharpoonup}
\newcommand{\embed}{\hookrightarrow}
\newcommand{\eps}{\varepsilon}
\renewcommand{\phi}{\varphi}
\def\eps{\varepsilon}
\def\wt{\widetilde}
\def\wh{\widehat}
\begin{document}
\title[The nonlinear Helmholtz equation in the plane]{
	Existence and asymptotic behavior of standing waves of the nonlinear Helmholtz equation in the plane}
\author{Gilles Ev\'equoz}
\address{Institut f\"ur Mathematik, Johann Wolfgang Goethe-Universit\"at,
Robert-Mayer-Str. 10, 60054 Frankfurt am Main, Germany}
\email{evequoz@math.uni-frankfurt.de}

\begin{abstract}
 In this paper we study the semilinear elliptic problem
 $$
 -\Delta u -k^2u=Q|u|^{p-2}u\quad\text{ in }\R^2,
 $$
 where $k>0$, $p\geq 6$ and $Q$ is a bounded function. 
 We prove the existence of real-valued $W^{2,p}$-solutions, 
 both for decaying and for periodic coefficient $Q$. 
 In addition, a nonlinear far-field relation 
 is derived for these solutions.	  	
\end{abstract}

\keywords{Nonlinear Helmholtz equation, standing waves, variational method, 
resolvent estimates, far-field expansion.}
\subjclass[2010]{35J20 (primary) 35J05 (secondary)}

\maketitle

\section{Introduction and main results}\label{sec:introduction}
  The purpose of this article is to study the existence and the properties of real-valued solutions 
  of the semilinear problem
  \begin{equation}\label{eq:33a}
  -\Delta u +\lambda u=Q|u|^{p-2}u\quad\text{ in }\R^2
  \end{equation}
  that vanish at infinity, in the case where $p>2$ and $Q$: $\R^2$ $\to$ $\R$ is a bounded function.
  For $\lambda\geq 0$, the problem \eqref{eq:33a} in $\R^N$ with such superlinear nonlinearities 
  has received a great deal of attention, starting with the celebrated papers by Berestycki and Lions \cite{berestycki-lions83,berestycki-lions83b} on the case $N\geq 3$ and 
  by Berestycki, Gallou\"et and Kavian \cite{be_ga_ka83} for the case $N=2$. 
  We refer the reader to the monographs \cite{ambrosetti-malchiodi-b,kuzin-pohozaev,rabinowitz,struwe,willem} 
  and the references therein for a detailed account on the study of such equations.
  In contrast, much less is known about the case $\lambda<0$, due in particular to the fact that the usual
  variational method in $H^1(\R^N)$ breaks down, since the solutions of \eqref{eq:33a}, if any, will not
  decay faster than $O(|x|^{\frac{1-N}2})$ as $|x|\to\infty$ (see \cite{kato59}). 
  Recent results obtained by T. Weth and the author \cite{evequoz-weth-dual}
  confirmed nevertheless the existence for $\lambda<0$ of nontrivial $W^{2,p}(\R^N)$-solutions 
  for the problem \eqref{eq:33a} in $\R^N$ with $N\geq 3$. In a previous paper \cite{evequoz-weth14}, 
  existence results for \eqref{eq:33a} in all dimensions $N\geq 2$ and for more general nonlinearities
  were obtained by studying a Dirichlet-to-Neumann boundary-value problem,
  but only nonlinearities having compact support were considered.
  Let us also mention results concerning complex-valued solutions of \eqref{eq:33a} with prescribed
  asymptotic behavior, obtained using contraction mapping arguments, 
  by Guti\'errez \cite{gutierrez04} in dimension $N=3, 4$ and with $p=4$, 
  and by Jalade \cite{jalade04} in dimension $N=3$ for more general, compactly supported nonlinearities.

  Our present goal is to extend the results of \cite{evequoz-weth-dual} to the two-dimensional case and, 
  at the same time, to provide a basis for further study of the planar nonlinear Helmholtz equation.
  Without loss of generality, we shall focus on the case $\lambda=-1$ and therefore deal
  with the problem
  \begin{equation}\label{eq:33b}
  -\Delta u - u=Q|u|^{p-2}u\quad\text{ in }\R^2.
  \end{equation} 
  As in \cite{evequoz-weth-dual}, we shall reformulate \eqref{eq:33b} as an integral equation, 
  involving the resolvent operator $\cR$ associated 
  to the inhomogeneous Helmholtz equation
  $$
  -\Delta u -u =f\quad\text{in }\R^2
  $$
  and the outgoing radiation condition, which in two dimensions reads as:
  \begin{equation}\label{eqn:sommerfeld}
  \nabla u(x)\cdot\frac{x}{|x|}-iu(x)=o(|x|^{-\frac12}),\quad \text{as }|x|\to\infty
  \end{equation}
  (see \cite[Chap. 3.4]{colton-kress}). More precisely, we shall look for solutions of \eqref{eq:33b} 
  that satisfy the fixed-point equation
  \begin{equation}\label{eqn:nlh_integral}
  u=\mR\left(Q|u|^{p-2}u\right), \quad u\in L^p(\R^2).
  \end{equation}
  Here, $\mR$ denotes the real part of the operator $\cR$.
  For more details concerning the link between \eqref{eqn:nlh_integral} and \eqref{eq:33b}
  we refer the reader to the introduction of \cite{evequoz-weth-dual}. 
  
  Our first main result deals with the regularity and the asymptotic behavior of solutions 
  of the nonlinear problem \eqref{eqn:nlh_integral}.
  There, and in the sequel, $\cF$ denotes the Fourier transform on the 
  space of tempered distributions.
  \begin{theorem}\label{thm:far-field-nonlin}
  	Let $6\leq p<\infty$, $Q\in L^\infty(\R^2)$ and consider a solution $u$ of~\eqref{eqn:nlh_integral}.
  	Then, $u\in W^{2,q}(\R^2)$ for all $6\leq q<\infty$ and it is a strong solution of \eqref{eq:33b}. 
  	
  	Moreover, if $p>6$, we have $u\in W^{2,q}(\R^2)$ for all $4<q<\infty$ and
  	\begin{equation}\label{eqn:far-field-u-point}
  	u(x)= \sqrt{\frac{\pi}2}|x|^{-\frac12}\ \text{\em Re}\bigl[e^{i|x|+\frac{i\pi}{4}}
  	\mathfrak{f}_u({\textstyle\frac{x}{|x|}})\bigr]+o(|x|^{-\frac12}),\text{ as }|x|\to\infty,
  	\end{equation}
  	where $\mathfrak{f}_u(\xi)=\cF(Q|u|^{p-2}u)(\xi)$ for $\xi\in\R^2$ with $|\xi|=1$.
  \end{theorem}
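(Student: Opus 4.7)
The plan is to exploit the fixed-point identity $u=\mR(Q|u|^{p-2}u)$ in two stages: an $L^q$-bootstrap using the two-dimensional resolvent estimates for $\mR$, followed by Calder\'on--Zygmund regularity applied to $-\Delta u=u+Q|u|^{p-2}u$; and then an explicit asymptotic expansion of $\mR$ as a convolution with the outgoing Green's function of $-\Delta-1$ on $\R^2$. For the regularity, set $f:=Q|u|^{p-2}u\in L^{p/(p-1)}(\R^2)$. The Kenig--Ruiz--Sogge / Guti\'errez-type $L^s\to L^q$ estimates for $\mR$ available in dimension two (valid whenever $2/3\le 1/s-1/q\le 1$, $1/s>3/4$ and $1/q<1/4$, and presumably recorded earlier in the paper) immediately give $u=\mR f\in L^q(\R^2)$ for every $q\in[6,\infty)$. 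With $u,f\in L^q(\R^2)$ in this range, the $L^q$-continuity of $(-\Delta+1)^{-1}\colon L^q\to W^{2,q}$ applied to $-\Delta u=u+f$ yields $u\in W^{2,q}(\R^2)$ and hence the strong-solution statement. For $p>6$, $p/(p-1)$ drops strictly below $6/5$, enlarging the admissible targets to every $q\in(4,\infty)$, and the same argument produces $u\in W^{2,q}(\R^2)$ throughout that range.

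For the far-field formula, I would use the explicit representation $\cR f(x)=\tfrac{i}{4}\int_{\R^2}H_0^{(1)}(|x-y|)f(y)\,dy$ together with the classical Hankel asymptotic $H_0^{(1)}(r)=\sqrt{2/(\pi r)}\,e^{i(r-\pi/4)}(1+O(r^{-1}))$ as $r\to\infty$. Since $u\in L^q(\R^2)$ for every $q\in(4,\infty)$ when $p>6$, interpolation gives $f\in L^1(\R^2)\cap L^\infty(\R^2)$. Writing $\hat x:=x/|x|$, I would split the integral according to $|y|\le R$ and $|y|>R$. On the inner piece, the expansion $|x-y|=|x|-\hat x\cdot y+O(|y|^2/|x|)$ combined with the Hankel asymptotic and dominated convergence (using the boundedness of $f$ on $\{|y|\le R\}$) produces, as $|x|\to\infty$,
$$\sqrt{\tfrac{\pi}{2}}\,|x|^{-1/2}e^{i|x|+i\pi/4}\int_{|y|\le R}e^{-i\hat x\cdot y}f(y)\,dy+o(|x|^{-1/2}).$$
Taking real parts and then letting $R\to\infty$ recovers \eqref{eqn:far-field-u-point}, provided one also shows that the outer piece contributes only $o(|x|^{-1/2})$ uniformly in $\hat x$.

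The main obstacle I anticipate is precisely controlling this outer piece $|y|>R$: the bound $|H_0^{(1)}(r)|\lesssim r^{-1/2}$ holds only for $r\ge 1$, while $H_0^{(1)}$ has a logarithmic singularity at $r=0$, so inside $\{|y|>R\}$ the subregion $\{|x-y|<1\}$ has to be handled separately using $f\in L^\infty$ and the local integrability of the logarithm. A further decomposition into $\{|y-x|\ge 1\}$ (where the $r^{-1/2}$ bound yields a uniform $|x|^{-1/2}\|f\|_{L^1(|y|>R)}$ contribution) and $\{|y-x|<1\}$ (controlled by $\|f\|_{L^\infty}$ and the area of this unit disc) is then needed. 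Coupling $R\to\infty$ with $|x|\to\infty$ at the right rate forces this tail to be $o(|x|^{-1/2})$, and it is precisely the $L^1$-integrability of $f$ --- which requires the strict inequality $p>6$ to pass through the bootstrap --- that makes this possible.
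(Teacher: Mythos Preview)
Your regularity bootstrap is essentially the paper's Lemma~\ref{lem:regularity}. One small correction: for $6<p<12$ a single application of the $L^{p'}\to L^q$ resolvent estimate does not reach every $q>4$ (that would require $1/p'-2/3\ge 1/4$, i.e.\ $p\ge 12$), so the iteration the paper writes out is actually needed.

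The far-field argument, however, has a genuine gap: the integrability $f\in L^1\cap L^\infty$ alone does not make the outer piece $o(|x|^{-1/2})$, and both of your sub-estimates fail as stated. On $\{|y|>R,\ |x-y|\ge 1\}$ the bound $|H_0^{(1)}(|x-y|)|\lesssim |x-y|^{-1/2}$ does \emph{not} yield a uniform $|x|^{-1/2}\|f\|_{L^1(|y|>R)}$ contribution, because for $y$ near $x$ (say $1\le|x-y|\le 2$) the factor $|x-y|^{-1/2}$ is of order $1$, not $|x|^{-1/2}$. On $\{|x-y|<1\}$, controlling by $\|f\|_{L^\infty}$ times the area of the unit disc gives only $O(1)$. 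A concrete obstruction to the whole scheme: take $f=\sum_{n\ge 1}2^{-n/2}\,1_{B_1(z_n)}$ with $|z_n|=2^n$; then $f\in L^1\cap L^\infty$, but at $x=z_n$ the contribution from $|x-y|<1$ is $\gtrsim 2^{-n/2}=|x|^{-1/2}$, which is not $o(|x|^{-1/2})$. No coupling of $R$ with $|x|$ rescues this.

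What your plan is missing is a \emph{pointwise} decay estimate for $u$, and hence for $f$. The paper obtains this as a separate ingredient (Lemma~\ref{lem:asympt_fct}): writing $u=K\ast(Vu)$ with $K=\text{Re}\,\Phi$ and $V=Q|u|^{p-2}\in L^{6/5}$, a Zemach--Odeh type iteration gives $|u(x)|\le C|x|^{-1/2}$. This implies $|f(x)|\le C|x|^{-(p-1)/2}$ with $(p-1)/2>2$ when $p>6$, and it is precisely this algebraic decay of $f$---not mere integrability---that forces each tail term to be $o(|x|^{-1/2})$; see Proposition~\ref{prop:farfield_N}, whose hypothesis $|f(x)|\le\kappa|x|^{-2-\varepsilon}$ is used in all three pieces $I_1$, $I_2$, $I_3$ of the expansion.
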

  \begin{remark}
  	As for the Helmholtz equation in dimension $3$ (see \cite[p.~694]{evequoz-weth-dual}), 
  	the pointwise expansion \eqref{eqn:far-field-u-point} is satisfied for all
  	noncritical exponents $p\in(6,\infty)$. 
  	In the case $p=6$, it holds for radial solutions, 
  	under additional assumptions on the function $Q$. 
  	Indeed, assuming $Q$ to be $C^1$, radially symmetric and radially decreasing, 
  	we find by \cite[Theorem 4]{evequoz-weth14}, that every radial 
  	solution of \eqref{eqn:nlh_integral} satisfies $|u(x)|\leq C|x|^{-\frac12}$. 
  	From Proposition \ref{prop:farfield_N} below, we then obtain 
  	\eqref{eqn:far-field-u-point}.
  	In general, however, only the following weaker form of \eqref{eqn:far-field-u-point} holds
  	(cf. \cite[Lemma 4.3]{evequoz-weth-dual}):
  	\begin{equation*}
  	\lim_{R\to\infty}\frac{1}{R}\int\limits_{B_R(0)}\Bigl|u(x)
  	-\sqrt{\frac{\pi}2}|x|^{-\frac12}\ \text{Re}\bigl[e^{i|x|+\frac{i\pi}{4}}
  	\mathfrak{f}_u({\textstyle\frac{x}{|x|}})\bigr]\Bigr|^2\, dx=0.
  	\end{equation*}
  \end{remark}
  Our second main result concerns the existence of solutions for \eqref{eqn:nlh_integral}, 
  and hence for \eqref{eq:33b}, under two different assumptions on the nonnegative function $Q$. 
  \begin{theorem}\label{thm:exist}
  	For $6\leq p<\infty$ and $Q \in L^\infty(\R^2)\backslash\{0\}$, $Q\geq 0$, the following holds.
  	\begin{itemize}
  		\item[(a)] If $Q(x)\to 0$ as $|x|\to\infty$, the problem \eqref{eqn:nlh_integral}
  		admits a sequence of pairs of solutions $\pm u_n \in W^{2,q}(\R^2)$, for all $6\leq q<\infty$ if $p=6$ 
  		and all $4<q<\infty$ if $p>6$, such that $\|u_n\|_p \to \infty$ as $n \to \infty$.
  		
  		\item[(b)] If $Q$ is $\Z^2$-periodic and $p>6$, then \eqref{eqn:nlh_integral}
  		has a nontrivial solution pair $\pm u\in W^{2,q}(\R^2)$ for all $4<q<\infty$.
  	\end{itemize}
  \end{theorem}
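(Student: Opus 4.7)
The plan is to adapt the dual variational framework of \cite{evequoz-weth-dual} to dimension two. Set $p'=p/(p-1)\in(1,6/5]$ and introduce the functional
\begin{equation*}
J(v)=\frac{1}{p'}\int_{\R^2}|v|^{p'}\,dx-\frac12\int_{\R^2}Q^{1/p}v\cdot\mR(Q^{1/p}v)\,dx
\end{equation*}
on $L^{p'}(\R^2)$. Its Fr\'echet derivative is $J'(v)=|v|^{p'-2}v-Q^{1/p}\mR(Q^{1/p}v)$, so critical points $v$ correspond bijectively to solutions $u=\mR(Q^{1/p}v)$ of~\eqref{eqn:nlh_integral} via the change of variables $v=Q^{(p-1)/p}|u|^{p-2}u$. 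The functional is even, of class $C^1$, and well defined because of a two-dimensional Kenig-Ruiz-Sogge estimate $\mR\colon L^{p'}(\R^2)\to L^p(\R^2)$, which is available precisely for $p\ge 6$ and which (together with the Birman-Solomyak type machinery used in \cite{evequoz-weth-dual}) is the main analytic tool to be developed beforehand.

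\textbf{Part (a).} Since $p'<2$, the subquadratic first term dominates near zero and $J(v)>0$ on a small sphere $\{\|v\|_{p'}=\rho\}$. For the opposing geometry one exploits the Fourier-side structure of $\mR$: its symbol is the principal value of $(|\xi|^2-1)^{-1}$, which is strictly positive on $\{|\xi|>1\}$. Choose smooth functions whose Fourier support lies in a small annulus outside $\{|\xi|=1\}$ and whose spatial support lies in a large region where $Q>0$; by taking progressively more such functions with disjoint spatial support one obtains an increasing sequence $V_n\subset L^{p'}$ of finite-dimensional subspaces on which the indefinite quadratic form $v\mapsto\int Q^{1/p}v\cdot\mR(Q^{1/p}v)\,dx$ is positive definite, so $J(tv)\to-\infty$ on each $V_n$. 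The Fountain (symmetric mountain pass) theorem of Bartsch then yields critical values $c_n\to+\infty$. The Palais-Smale condition is restored by the decay hypothesis: the operator $T\colon v\mapsto Q^{1/p}\mR(Q^{1/p}v)$ is a composition of $\mR$ with two multipliers whose symbols vanish at infinity and is therefore compact on $L^{p'}$; bounded $(PS)$-sequences consequently admit convergent subsequences. Setting $u_n=\mR(Q^{1/p}v_n)$ and applying Theorem~\ref{thm:far-field-nonlin} upgrades the regularity to $W^{2,q}$ as claimed, while $\|v_n\|_{p'}\to\infty$ together with the explicit relation $v=Q^{(p-1)/p}|u|^{p-2}u$ forces $\|u_n\|_p\to\infty$.

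\textbf{Part (b).} The $\Z^2$-periodicity of $Q$ makes $J$ invariant under the translations $v\mapsto v(\cdot-k)$ for $k\in\Z^2$, so $T$ is no longer compact and the $(PS)$-condition fails. The plan is instead to use the ordinary mountain pass lemma to produce a bounded $(PS)_c$-sequence $v_n$ at the positive mountain pass level, and then to run a Lions-type concentration-compactness argument: either $v_n$ is vanishing in $L^{p'}$ on every unit cube, which would force $\|Q^{1/p}v_n\cdot\mR(Q^{1/p}v_n)\|_{L^1}\to 0$ by a dimension-two Lions-type lemma for the Helmholtz resolvent and contradict $c>0$; or there exist $y_n\in\Z^2$ and $\delta>0$ with $\int_{y_n+[0,1]^2}|v_n|^{p'}\,dx\ge\delta$ along a subsequence. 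In the latter case $\tilde v_n=v_n(\cdot+y_n)$ remains a $(PS)_c$-sequence by periodicity, has a nonzero weak limit $v$ in $L^{p'}$, and a standard argument shows $J'(v)=0$, yielding the desired nontrivial solution $u=\mR(Q^{1/p}v)$.

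The central obstacle is analytic and lives at the Stein-Tomas endpoint: the estimate $\mR\colon L^{p'}(\R^2)\to L^p(\R^2)$ is sharp exactly at $p=6$, so compactness, non-vanishing, and the derivation of the far-field-compatible regularity all become delicate, and the strict inequality $p>6$ in part (b) is needed to guarantee that Fourier-mass concentrating tangentially on $\{|\xi|=1\}$ does not saturate the resolvent bound in the concentration-compactness step. Establishing the planar analogues of the resolvent, restriction and non-vanishing estimates of \cite{evequoz-weth-dual} is the key technical burden that the rest of the paper must bear before this variational scheme can be run.
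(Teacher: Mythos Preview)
Your overall variational strategy matches the paper's: the same dual functional $J$ on $L^{p'}(\R^2)$, the symmetric mountain pass for part (a) with compactness of $\mK$ coming from the decay of $Q$, and for part (b) a mountain-pass Palais--Smale sequence followed by a nonvanishing lemma and $\Z^2$-translation to extract a nonzero weak limit. The reduction to the planar resolvent and restriction estimates is also exactly what the paper does.

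There is, however, a genuine gap in your construction of the finite-dimensional subspaces on which the quadratic form $v\mapsto\int Q^{1/p}v\,\mR(Q^{1/p}v)\,dx$ is positive definite. You propose functions whose Fourier transform is supported in an annulus $\{|\xi|>1+\varepsilon\}$ \emph{and} whose spatial support lies in a region where $Q>0$; but a nonzero function cannot have both compact Fourier support and compact spatial support. Even if you relax to approximate spatial localization, the form you must control is $\langle Q^{1/p}v,\mR(Q^{1/p}v)\rangle$, not $\langle v,\mR v\rangle$: multiplication by $Q^{1/p}$ smears the Fourier support back across $\{|\xi|=1\}$, and in the decaying case you cannot undo this by setting $v=Q^{-1/p}w$ since $Q^{-1/p}$ blows up at infinity while $w$ (having compact Fourier support) cannot be compactly supported in space. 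So the Fourier-side positivity argument, as stated, does not close.

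The paper circumvents this entirely by working in real space. It exploits the fact that $\text{Re}\,\Phi(x)\sim\frac{1}{2\pi}\log(2/|x|)\to+\infty$ as $|x|\to 0$: choosing $m$ bump functions $z_1,\dots,z_m$ supported in tiny disjoint balls $B^1,\dots,B^m$ inside $\{Q>0\}$, with the ball diameters much smaller than the pairwise distances, makes the diagonal interaction terms $\int\!\!\int_{B^i\times B^i}\text{Re}\,\Phi(x-y)Q^{1/p}z_i(x)Q^{1/p}z_i(y)\,dx\,dy$ dominate the off-diagonal ones, so the form is positive definite on $\cW_m=\mathrm{span}\{z_1,\dots,z_m\}$. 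This interaction estimate is more delicate in two dimensions than for $N\ge3$ because the singularity is only logarithmic, and the paper has to balance the scales $\sigma$ and $\sigma^m$ carefully; this is the step you should replace your Fourier argument with.
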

  \begin{remark}
  	Theorem \ref{thm:far-field-nonlin} and \ref{thm:exist} can be extended to more general
  	nonlinearities, like those studied in \cite{e-helmholtz-orlicz-compact} for $N\geq 3$. 
  	Also, replacing the assumption $Q(x)\to 0$ as $|x|\to\infty$ by 
  	$Q\in L^{\frac32}(\R^2)\cap L^\infty(\R^2)$ in Theorem \ref{thm:exist}(a), 
  	one can prove that for every $p\in(2,\infty)$ the problem \eqref{eqn:nlh_integral} has infinitely 
  	many pairs of solutions $\{\pm u_n\}$ such that $Q^\frac1p u_n\in L^p(\R^2)$
  	for all $n$ and $\int_{\R^2}Q|u_n|^p\, dx\to \infty$ as $n\to\infty$.
  \end{remark}
  The proof of the above results is based on the method developed in the recent paper \cite{evequoz-weth-dual}, 
  but we emphasize that these results do not follow from their higher-dimensional counterparts.
  Indeed, the presence of a logarithmic singularity at $0$ in the kernel 
  of the resolvent operator in $\R^2$ (cf. \cite{colton-kress}) requires new estimates, 
  different from those obtained in \cite{evequoz-weth-dual},
  and which we believe to be also of independent interest.

  The paper is organized as follows.
  In the next section, we define the resolvent Helmholtz operator $\cR$ and derive
  $L^p$-estimates similar to \cite{gutierrez04,KRS87}.
  Next, the asymptotic expansion and the decay of solutions of linear equations are studied 
  and the section concludes with the proof of Theorem \ref{thm:far-field-nonlin}.
  Section \ref{sec:variational} is devoted to the existence proof for solutions of \eqref{eqn:nlh_integral}. 
  It starts with the extension of the dual variational method of \cite{evequoz-weth-dual} to $\R^2$ 
  and continues with the proof of Theorem \ref{thm:exist}(a), as an application of the symmetric Mountain Pass Theorem.
  There, an interaction estimate, more involved than in the case $N\geq 3$ is used to construct finite-dimensional
  subspaces of arbitrary dimension on which the quadratic part of the energy functional is positive.
  Next, the periodic case is studied and a nonvanishing property for the quadratic form associated to the resolvent $\cR$
  is derived. As in \cite{evequoz-weth-dual}, it constitutes a key ingredient in the proof of the existence
  of solutions in the periodic case, by which the paper concludes.

  \section{The planar resolvent and the far-field relation}\label{sec:resolvent}
  For a function $f$ in the Schwartz space $\cS(\R^2)$, the unique solution of the Helmholtz
  equation $-\Delta u -u=f$ in $\R^2$ which satisfies the radiation condition \eqref{eqn:sommerfeld} 
  is given by the convolution $u=\Phi\ast f$, where
  \begin{equation*}
  \Phi(x)=\frac{i}{4} H^{(1)}_0(|x|), \quad x\in\R^2.
  \end{equation*}
  Here, $H^{(1)}_0$ denotes the Hankel function of the first kind of order $0$ 
  (see e.g., \cite[Chap. 3.4]{colton-kress}).
  In view of the asymptotic behavior of $\Phi$ given by
  \begin{equation}\label{eq:3}
  \Phi(x) =\left \{
  \begin{aligned} 
  &\frac1{2\sqrt{2\pi}} |x|^{-\frac12}\, e^{i|x|+i\frac{\pi}{4}}[1+O(|x|^{-1})] 
  &&\qquad \text{as $|x| \to \infty$,}\\
  &\frac1{2\pi}\log\Bigl(\frac{2}{|x|}\Bigr)\left[1+O\Bigl(\frac1{\bigl|\log |x| \bigr|}\Bigr)\right]  
  &&\qquad \text{as $x \to 0$},
  \end{aligned}
  \right.
  \end{equation}
  (see, e.g., \cite[Eq. (5.16.3)]{lebedev}), there is a constant $C_0>0$ such that 
  \begin{equation}\label{eq:4}
  |\Phi(x)| \le C_0 \min \{1+\bigl|\log |x|\bigr|,|x|^{-\frac12}\} \qquad
  \text{for $x \in \R^2 \setminus \{0\}$.}   
  \end{equation}
  Following ideas of Kenig-Ruiz and Sogge \cite[Theorem 2.3]{KRS87} and Guti\'errez \cite[Theorem 6]{gutierrez04}, 
  we prove estimates which show that the resolvent Helmholtz operator $f\mapsto \Phi\ast f$, defined for $f\in\cS(\R^2)$,
  has for certain $1\leq t,q\leq\infty$ a continuous extension $\cR$: $L^t(\R^2)$ $\to$ $L^q(\R^2)$.
  In the following, given $1\leq p\leq\infty$, we let $p'$ denote its conjugate exponent.
  \begin{theorem}\label{thm:gut6}
  	Let $1\leq t<\frac43$ and $4<q\leq\infty$ satisfy $\frac23\leq \frac1t-\frac1q<1$. 
  	There is a constant $C=C(t,q)>0$ such that
  	\begin{equation}\label{eqn:resolv_pq}
  	\|\cR f\|_q\leq C \|f\|_t\quad\text{ for all }f\in\cS(\R^2).
  	\end{equation}
  	In particular, if $6\leq q<\infty$ then \eqref{eqn:resolv_pq} holds with $t=q'$.
  \end{theorem}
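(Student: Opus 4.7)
The plan is to adapt the Kenig-Ruiz-Sogge scheme of \cite{KRS87,gutierrez04} to $\R^2$ by splitting the kernel into its local and long-range components and estimating each by different means. Fix a smooth radial cutoff $\chi$ with $\chi\equiv 1$ on $\{|x|\le 1\}$ and $\supp\chi\subset\{|x|\le 2\}$, write $\Phi=\Phi_0+\Phi_\infty$ with $\Phi_0=\chi\Phi$ and $\Phi_\infty=(1-\chi)\Phi$, and set $\cR_j f=\Phi_j\ast f$. It then suffices to prove $\|\cR_0 f\|_q+\|\cR_\infty f\|_q\le C\|f\|_t$ under the stated hypotheses on $(t,q)$.

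For $\cR_0$, the bound \eqref{eq:4} gives $|\Phi_0(x)|\le C(1+\bigl|\log|x|\bigr|)\chi_{\{|x|\le 2\}}(x)$, so $\Phi_0\in L^r(\R^2)$ for every $r\in[1,\infty)$. The hypothesis $\tfrac23\le\tfrac1t-\tfrac1q<1$ forces $\tfrac1r:=1-(\tfrac1t-\tfrac1q)\in(0,\tfrac13]$, hence $r\in[3,\infty)$, and Young's convolution inequality immediately yields $\|\cR_0 f\|_q\le\|\Phi_0\|_r\|f\|_t$. For $\cR_\infty$, the pointwise bound $|\Phi_\infty(x)|\le C|x|^{-1/2}\chi_{\{|x|\ge 1\}}(x)$ places $\Phi_\infty$ in $L^r(\R^2)$ only for $r>4$, so Young's inequality alone covers only the subrange $\tfrac1t-\tfrac1q<\tfrac34$. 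To reach the full range $\tfrac34\le\tfrac1t-\tfrac1q<1$ one must exploit the oscillation $e^{i|x|}$ visible in \eqref{eq:3}.

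My plan for this last step is a dyadic decomposition $\Phi_\infty=\sum_{j\ge 0}\Phi_j$ with $\supp\Phi_j\subset\{2^j\le|x|\le 2^{j+2}\}$, followed, after rescaling each $\Phi_j$ to the unit annulus, by the Stein-Tomas extension estimate on the circle $\|\widehat{g\,d\sigma}\|_{L^6(\R^2)}\le C\|g\|_{L^2(S^1)}$, whose conjugate gap is precisely the borderline value $\tfrac1t-\tfrac1q=\tfrac23$. Complex interpolation of an analytic family of operators obtained by localizing $\Phi$ in frequency near $|\xi|=1$, between this Stein-Tomas endpoint and a Hardy-Littlewood-Sobolev estimate at the other end, yields the full admissible range, while the $O(|x|^{-1})$ remainder in \eqref{eq:3} ensures geometric summation of the dyadic contributions. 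The final claim of the theorem is then immediate, since for $t=q'$ with $q\ge 6$ one has $\tfrac1t-\tfrac1q=1-\tfrac2q\in[\tfrac23,1)$. The main obstacle is the oscillatory estimate on each annulus: the $|x|^{-1/2}$ decay in $\R^2$ is markedly weaker than the $|x|^{-(N-1)/2}$ decay exploited for $N\ge 3$ in \cite{evequoz-weth-dual}, so oscillation must be extracted at every scale, and the lower bound $\tfrac1t-\tfrac1q\ge\tfrac23$ is forced by Knapp's example for the $L^2$ restriction exponent on $S^1$.
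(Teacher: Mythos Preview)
Your overall plan is in the right spirit, but the decomposition and the Stein--Tomas step differ from the paper in ways worth clarifying.

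The paper splits $\Phi=\Phi_1+\Phi_2$ in \emph{frequency}: $\Phi_1=2\pi(\psi\ast\Phi)$ with $\widehat\psi$ supported in a thin annulus around $|\xi|=1$, and $\Phi_2=\Phi-\Phi_1$. This makes $\widehat{\Phi_2}$ smooth, so $\Phi_2$ decays like $|x|^{-3}$ at infinity while keeping only the logarithmic singularity at $0$; hence $\Phi_2\in L^r$ for every $r<\infty$ and Young handles it just as it does your $\Phi_0$. The gain is on the other side: $\widehat{\Phi_1}$ is supported in $\{||\xi|-1|\le\tfrac14\}$, so after a physical-space dyadic decomposition $\Phi_1=\sum_j\Phi_1\phi_j$ one may freely insert a convolution with a Schwartz function $\varphi$ whose transform equals $1$ on that annulus, and the Stein--Tomas restriction estimate on $S^1$ then applies cleanly via Plancherel to give $\|Q^j\ast f\|_2\le C2^{j/2}\|f\|_{6/5}$. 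Interpolating this with the trivial bound $\|Q^j\|_\infty\le C2^{-j/2}$ and summing the geometric series yields the open range $\tfrac23<\tfrac1t-\tfrac1q<1$. Your $\Phi_\infty$, by contrast, is not frequency-localized, so this reduction to Stein--Tomas is not available as written; and your phrase ``rescaling each $\Phi_j$ to the unit annulus'' conflates physical and frequency localization---Stein--Tomas lives on the unit circle in $\xi$-space, and rescaling a physical dyadic shell moves the oscillation to frequency $|\xi|\sim 2^j$, which reintroduces a scale-dependent constant rather than removing it. When you later mention ``localizing $\Phi$ in frequency near $|\xi|=1$'' you are effectively reverting to the paper's decomposition, making the initial physical cutoff redundant.

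For the endpoint line $\tfrac1t-\tfrac1q=\tfrac23$ the paper does not use analytic interpolation. Instead it runs Bourgain's summation trick: restricted weak-type bounds at the two endpoints $(t,q)=(\tfrac{12}{11},4)$ and $(\tfrac43,12)$ are obtained by splitting the dyadic sum at an optimized index and balancing the $L^2$ and $L^\infty$ pieces on characteristic functions, and the strong estimate on the open segment then follows by real interpolation in Lorentz spaces. Your sketch does not isolate the endpoint; an analytic-family argument could in principle reach it directly, but you would still have to set up the family and verify the two boundary estimates, neither of which is indicated in what you wrote.
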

  In order to prove the above estimates, we consider a decomposition of the fundamental solution $\Phi$
  which we shall use also further below (see Theorem \ref{thm:nonvanishing}).
  
  \bigskip
  
  Fix $\psi \in \cS(\R^2)$ such that the Fourier transform $\wh{\psi}\in\cC^\infty_c(\R^2)$ 
  is radial, $0\leq \wh{\psi}\leq 1$, $\wh{\psi}(\xi)=1$ for $| |\xi|-1|\leq\frac16$ 
  and $\wh{\psi}(\xi)=0$ for $| |\xi|-1|\geq\frac14$. 
  Write $\Phi=\Phi_1+\Phi_2$, where
  \begin{equation}\label{eqn:decomp_phi}
  \Phi_1:= 2\pi (\psi * \Phi), \quad\text{and}\quad \Phi_2 = \Phi-\Phi_1.
  \end{equation}
  Since $\psi$ is a Schwartz function, we obtain from \eqref{eq:4}, 
  making $C_0$ larger if necessary,
  \begin{equation}\label{eqn:phi1}
  |\Phi_1(x)|\leq C_0 (1+|x|)^{-\frac12},\quad x\in\R^2.
  \end{equation}
  On the other hand, since $\cF(\Phi)(\xi)=\frac1{2\pi}(|\xi|^2-1-i0)^{-1}$ as a tempered
  distribution (see \cite{gelfand}) and since $\cF(\Phi_2)=(1-\wh{\psi})\cF(\Phi)$,
  it follows that $\cF(\Phi_2)\in C^\infty(\R^2)$ and $\cF(\Phi_2)(\xi)=(|\xi|^2-1)^{-1}$
  for $|\xi|\geq \frac54$. Consequently, $\partial^\gamma\cF(\Phi_2)\in L^1(\R^2)$
  for all $\gamma\in\N_0^N$ such that $|\gamma|\geq 1$ and this gives $|\Phi_2(x)|\leq \kappa_s|x|^{-s}$
  for all $s>0$, with some constant $\kappa_s>0$. Using also \eqref{eq:4} and \eqref{eqn:phi1}, we obtain,
  making again $C_0$ larger,
  \begin{equation}\label{eqn:phi2}
  |\Phi_2(x)|\leq C_0 \min\{1+\bigl| \log|x|\bigr|,|x|^{-3}\}, \quad x\in\R^2\backslash\{0\}.
  \end{equation}
  
  \bigskip
  
  \begin{proof}[Proof of Theorem~\ref{thm:gut6}]
  	The proof is inspired by Theorem 6 in \cite{gutierrez04}. In the sequel,
  	$C$ will denote a constant, whose value may change from line to line.
  	
  	Using \eqref{eqn:phi2} we see that $\Phi_2\in L^r(\R^2)$ for all $1\leq r<\infty$, 
  	and therefore Young's inequality gives for $1\leq t, q\leq \infty$ such that $0\leq \frac1t-\frac1q<1$,
  	\begin{equation}\label{eqn:estim_phi2}
  	\|\Phi_2\ast f\|_q\leq \|\Phi_2\|_r \|f\|_t\leq C \|f\|_t \quad\text{ for all }f\in\cS(\R^2).
  	\end{equation}
  	
  	To estimate the convolution with $\Phi_1$, let us fix a radial, nonnegative function 
  	$\eta \in C_c^\infty(\R^2)$ such that $\eta(x)= 1$ if $0 \le |x| \le 1$, 
  	$\eta(x)= 0$ if $|x|\ge 2$. For $j \in \N$, define $\phi_j \in C_c^\infty(\R^2)$
  	by $\phi_j(x)= \eta(x/2^j)- \eta(x/2^{j-1})$. Let also $\phi_0=\eta$.
  	We then have the dyadic decomposition 
  	\begin{equation*}
  	\Phi_1= \sum \limits_{j=0}^\infty \Phi_1^j \qquad \text{with 
  		$\Phi_1^j:= \Phi_1 \phi_j$ for $j \in \N\cup\{0\}$.} 
  	\end{equation*}
  	Choosing also $\varphi \in \cS(\R^2)$ 
  	such that its Fourier transform $\wh{\varphi} \in C^\infty_c(\R^2)$ 
  	is radial, nonnegative and satisfies
  	$\wh{\varphi}(\xi) = 1$ on $\{\xi \::\: ||\xi|-1|\le \frac12\}$ 
  	and $\wh{\varphi}(\xi) = 0$ on $\{\xi \::\: ||\xi|-1|\ge \frac34\}$, 
  	we see that $(\Phi_1\ast\varphi)\ast f=2\pi\Phi_1\ast f$ for all $f\in\cS(\R^2)$,
  	since $\supp \cF(\Phi_1)\subset\{\xi \::\: ||\xi|-1|\le \frac14\}$.
  	Hence, we look at the decomposition
  	\begin{equation}\label{eqn:dyadic_Q}
  	\Phi_1\ast \varphi= \sum \limits_{j=0}^\infty Q^j \qquad \text{with 
  		$Q^j:= \Phi_1^j\ast\varphi$ for $j \in \N\cup\{0\}$.} 
  	\end{equation}
  	From the decay properties of $\Phi_1$, we see that
  	\begin{equation}\label{eqn:l1_linf_phi1}
  	\|Q^j\|_\infty \leq \|\varphi\|_1\|\Phi_1^j\|_\infty\leq C 2^{-\frac{j}{2}} 
  	\qquad \text{for all $j\geq 1$},
  	\end{equation}
  	where $C$ is independent of $j$. 
  	On the other hand, Plancherel's identity and the Stein-Tomas Theorem \cite{tomas75} imply
  	for $1\leq t\leq\frac65$ and $f\in\cS(\R^2)$,
  	\begin{align}
  	\|Q^j\ast f\|_2^2
  	&=(2\pi)^2\int_{||\xi|-1|\leq\frac34}\left|\wh{\Phi}_1^j(\xi)\wh{g}(\xi)\right|^2\, d\xi
  	\leq C\int_{\frac14}^{\frac74} r^{\frac4t-3}\left|\wh{\Phi}_1^j(r)\right|^2\, \|g\|_t^2\, dr\nonumber\\
  	&\leq C\|\varphi\|_1^2 \|f\|_t^2\int_{\R^2}\left|\Phi_1^j(x)\right|^2\, dx 
  	\leq C 2^j\|f\|_t^2,\label{eqn:stein_tomas_phi1}
  	\end{align}
  	where we have set $g=\varphi\ast f$, and $C$ does not depend on $j$. 
  	From these two estimates and the Riesz-Thorin theorem 
  	\cite[Theorem V.1.3]{stein-weiss}, it follows that
  	\begin{equation*}
  	\|Q^j\ast f\|_q\leq C 2^{j\left(\frac2q-\frac12\right)}\|f\|_t\quad\text{ for all }f\in\cS(\R^2),
  	\end{equation*}
  	$1\leq t\leq\frac65$ and $2\leq q\leq \frac{t'}3$. 
  	Observe that the exponent is negative if $q>4$.
  	Since $\Phi_0^j\in L^1(\R^2)\cap L^\infty(\R^2)$, we conclude that for
  	$1\leq t< \frac{12}{11}$ and $4<q\leq \frac{t'}{3}$,
  	$$
  	\|\Phi_1\ast f\|_q=\frac1{2\pi}\|(\Phi_1\ast\varphi)\ast f\|_q
  	\leq \sum_{j=0}^\infty\|Q^j\ast f\|_q
  	\leq C\|f\|_t \sum_{j=0}^\infty 2^{j\left(\frac2q-\frac12\right)},
  	$$
  	and therefore
  	\begin{equation}\label{eqn:resolv_phi1}
  	\|\Phi_1\ast f\|_q\leq C \|f\|_t.
  	\end{equation}
  	By duality and convexity,
  	this estimate holds for all $1\leq t<\frac43$ and $4<q\leq\infty$ 
  	such that $\frac1t-\frac1q>\frac23$.
  	Taking into account the estimate \eqref{eqn:estim_phi2} for $\Phi_2$, we obtain
  	\eqref{eqn:resolv_pq}
  	for all $1\leq t<\frac43$, $4<q\leq\infty$ such that $\frac23<\frac1t-\frac1q<1$.
  	
  	To conclude the proof, it remains to show that \eqref{eqn:resolv_phi1} 
  	also holds when $\frac1t-\frac1q=\frac23$. We proceed similarly to \cite[Theorem 6]{gutierrez04}.
  	Using real interpolation (see \cite[Section V.3]{stein-weiss}), it is enough to prove
  	the restricted weak-type estimates
  	\begin{equation}\label{eqn:lorentz}
  	\|\Phi_1\ast f\|_{q,\infty}\leq C\|f\|_{t,1}\quad\text{ for all }f\in\cS(\R^2)
  	\end{equation}
  	for the endpoints $(t,q)=(\frac{12}{11},4)$ and $(\frac43,12)$, where $\|\cdot\|_{r,s}$ denotes
  	the norm on the Lorentz space $L^{r,s}(\R^2)$.
  	Moreover, by \cite[Theorem V.3.13 and Theorem V.3.21]{stein-weiss}, 
  	it suffices to prove \eqref{eqn:lorentz} 
  	for characteristic functions of measurable sets:
  	\begin{equation}\label{eqn:lorentz_char}
  	\lambda \left|\{x\in\R^2\, :\, |(\Phi_1\ast 1_E)(x)|>\lambda\}\right|^{\frac1q}
  	\leq C |E|^\frac1t\quad\text{ for all }\lambda>0,\, |E|<\infty.
  	\end{equation}
  	Here, $|E|$ denotes the measure of $E\subset\R^2$ and $1_E$ its characteristic
  	function.
  	Setting $A:=\{x\in\R^2\, :\, |(\Phi_1\ast 1_E)(x)|>\lambda\}$, choosing $\varphi\in\cS(\R^2)$ 
  	as above and recalling the dyadic decomposition \eqref{eqn:dyadic_Q}, we can write
  	\begin{align*}
  	|A|=\int_A\, dx\leq \frac1\lambda\int_{\R^2}\left|(\Phi_1\ast 1_E)(x)\right| 1_A(x)\, dx
  	\leq \frac1\lambda\sum_{j=0}^\infty \int_{\R^2}\left|(Q^j\ast 1_E)(x)\right| 1_A(x)\, dx.
  	\end{align*}
  	From the estimate \eqref{eqn:stein_tomas_phi1} with $t=\frac65$ and by duality, we obtain
  	\begin{equation}\label{eqn:estimQ_dual}
  	\|Q^j\ast f\|_2\leq C 2^{\frac{j}2}\|f\|_{\frac65} 
  	\quad\text{and}\quad \|Q^j\ast f\|_6\leq C 2^{\frac{j}2}\|f\|_2
  	\end{equation}
  	for all $f\in\cS(\R^2)$, the constant $C$ being independent of $j$.
  	Moreover, \eqref{eqn:l1_linf_phi1} gives
  	\begin{equation}\label{eqn:estimQ_1_inf}
  	\|Q^j\ast f\|_\infty\leq C 2^{-\frac{j}2}\|f\|_1, \quad j\geq 1.
  	\end{equation}
  	For $M\in\N_0$, we therefore obtain, using H\"older's inequalityand approximating $1_E$ by
  	Schwartz functions,
  	\begin{align*}
  	\sum_{j=0}^\infty\int_{\R^2}|(Q^j\ast 1_E)(x)|1_A(x)\, dx 
  	&\leq  \sum_{j=0}^M \|Q^j\ast 1_E\|_2 |A|^{\frac12} + \sum_{j=M+1}^\infty \|Q^j\ast 1_E\|_\infty |A|\\
  	&\leq C \left\{2^{\frac{M}2} |E|^{\frac56}|A|^{\frac12} +  2^{-\frac{M+1}2}|E| |A|\right\}.
  	\end{align*}
  	Choosing $M\in\N_0$ with $2^{\frac{M}2}\leq |E|^\frac1{12}|A|^\frac14\leq 2^{\frac{M+1}2}$,
  	the preceding estimates give
  	\begin{align*}
  	\lambda|A|\leq\sum_{j=0}^\infty\int_{\R^2}|(Q^j\ast 1_E)(x)|1_A(x)\, dx 
  	\leq C|E|^{\frac{11}{12}}|A|^\frac34,
  	\end{align*}
  	which yields $\lambda|A|^{\frac14}\leq C|E|^{\frac{11}{12}}$ and shows \eqref{eqn:lorentz_char} 
  	for the exponents $t=\frac{12}{11}$, $q=4$.
  	
  	Similarly, H\"older's inequality and the estimates \eqref{eqn:estimQ_dual}, 
  	\eqref{eqn:estimQ_1_inf} give
  	\begin{align*}
  	\sum_{j=0}^\infty\int_{\R^2}|(Q^j\ast 1_E)(x)|1_A(x)\, dx 
  	&\leq  \sum_{j=0}^M \|Q^j\ast 1_E\|_6 |A|^{\frac56} + \sum_{j=M+1}^\infty \|Q^j\ast 1_E\|_\infty |A|\\
  	&\leq C \left\{2^{\frac{M}2} |E|^{\frac12}|A|^{\frac56} +  2^{-\frac{M+1}2}|E| |A|\right\}.
  	\end{align*}
  	Choosing this time $M\in\N_0$ such that 
  	$2^{\frac{M}2}\leq |E|^\frac14|A|^\frac1{12}\leq 2^{\frac{M+1}2}$,
  	we find that
  	$\lambda|A|^{\frac1{12}}\leq C|E|^{\frac34}$, proving \eqref{eqn:lorentz_char} 
  	for the exponents $p=\frac43$, $q=12$. The proof is complete.
  \end{proof}
  We note that the conclusion of Theorem \ref{thm:gut6} is false for $(t,q)=(1,\infty)$. 
  Indeed, if $f\in C^\infty_c(\R^2)\backslash\{0\}$ is nonnegative and $f(x)=0$ for $|x|\geq 1$, 
  consider the sequence $(f_k)_k$, where $f_k(x)=k^2f(kx)$, $x\in\R^2$. 
  Then, $\|f_k\|_1=\|f\|_1$ for all $k$, but for every $x\neq 0$, we find
  $$
  (\Phi\ast f_k)(x)=\int_{B_1(0)}\Phi(x-k^{-1}y)f(y)\, dy\longrightarrow \Phi(x)\|f\|_1, 
  \quad \text{as }k\to\infty,
  $$
  by the Dominated Convergence Theorem. Hence, $\|\Phi\ast f_k\|_\infty\to \infty$ as $k\to\infty$.
  
  \bigskip
  
  We now turn to the pointwise asymptotic expansion of solutions of the Helmholtz equation
  and first look at the the linear problem $u=\cR f$. 
  \begin{proposition}\label{prop:farfield_N}
  	Let $f\in L^1(\R^2)$ satisfy $|f(x)|\leq \kappa|x|^{-2-\eps}$ for some $\kappa, \eps>0$. 
  	Then
  	$$
  	\cR f(x)= \sqrt{\frac{\pi}{2}}\ \frac{e^{i|x|+\frac{i\pi}{4}}}{|x|^{\frac12}}\ 
  	\wh{f}\bigl(\textstyle{\frac{x}{|x|}}\bigr) 
  	+ o(|x|^{-\frac12}), \quad\text{as }|x|\to\infty.
  	$$
  \end{proposition}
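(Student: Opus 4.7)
The plan is to write $\cR f(x)=(\Phi\ast f)(x)=\int_{\R^2}\Phi(x-y)f(y)\,dy$ and localise the convolution. Given $\delta>0$, I fix $R\ge 1$ (to be chosen large depending on $\delta$) and split
\begin{equation*}
\cR f(x)=\int_{|y|\le R}\Phi(x-y)f(y)\,dy+\int_{|y|>R}\Phi(x-y)f(y)\,dy=:I_1(x)+I_2(x).
\end{equation*}
The tail $I_2$ must be shown to be $o(|x|^{-1/2})$ (more precisely, $\le\delta|x|^{-1/2}$ once $R$ and then $|x|$ are chosen large), and the local piece $I_1$ must produce the claimed leading term.

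For $I_2$, I combine the pointwise decay $|f(y)|\le\kappa|y|^{-2-\eps}$ with the bound \eqref{eq:4} on $\Phi$. Because $\Phi$ has a logarithmic singularity at $0$, I further split the $y$-region according to whether $|x-y|\le 1$, $1<|x-y|\le|x|/2$, or $|x-y|>|x|/2$. In the first subregion $|y|\ge|x|-1$ and the logarithmic singularity is integrable, giving $O(|x|^{-2-\eps})$. In the second subregion $|y|\ge|x|/2$, so $|f(y)|\le C|x|^{-2-\eps}$ while $\int_{1<|z|\le|x|/2}|z|^{-1/2}\,dz=O(|x|^{3/2})$, contributing $O(|x|^{-1/2-\eps})$. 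In the third subregion $|x-y|^{-1/2}\le C|x|^{-1/2}$ can be pulled out, leaving $C|x|^{-1/2}\int_{|y|>R}|y|^{-2-\eps}\,dy\le CR^{-\eps}|x|^{-1/2}$. In total
\begin{equation*}
|I_2(x)|\le C\bigl(R^{-\eps}|x|^{-1/2}+|x|^{-1/2-\eps}+|x|^{-2-\eps}\bigr),
\end{equation*}
with $C$ independent of $R$ and $x$.

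For $I_1$, since $|y|\le R$ and $|x|\ge 2R$ eventually, $|x-y|\ge|x|/2$ and the asymptotic expansion \eqref{eq:3} of $\Phi$ applies uniformly. Writing $|x-y|=|x|-\langle x/|x|,y\rangle+O(R^2/|x|)$, I obtain
\begin{equation*}
\Phi(x-y)=\frac{1}{2\sqrt{2\pi}}|x|^{-1/2}e^{i|x|+i\pi/4}e^{-i\langle x/|x|,y\rangle}+\epsilon_R(x,y),
\end{equation*}
where $|\epsilon_R(x,y)|\le c(R,|x|)|x|^{-1/2}$ with $c(R,|x|)\to 0$ as $|x|\to\infty$ for each fixed $R$ (the pieces $O(R/|x|)$, $O(R^2/|x|)$ and $O(|x|^{-1})$ coming from the amplitude, phase and higher-order terms respectively, all beat $|x|^{-1/2}$ when integrated against the integrable function $|f|\mathbf{1}_{|y|\le R}$). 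Integrating, using $f\in L^1(\R^2)$ so that $\int_{|y|\le R}e^{-i\xi\cdot y}f(y)\,dy=2\pi\hat f(\xi)+O(\eta_R)$ uniformly in $\xi\in S^1$ with $\eta_R:=\int_{|y|>R}|f(y)|\,dy$, and using the numerical identity $2\pi/(2\sqrt{2\pi})=\sqrt{\pi/2}$, I find
\begin{equation*}
I_1(x)=\sqrt{\tfrac{\pi}{2}}\,|x|^{-1/2}e^{i|x|+i\pi/4}\hat f(x/|x|)+O\bigl(\eta_R|x|^{-1/2}\bigr)+o_R(|x|^{-1/2}).
\end{equation*}
Combining the two estimates and choosing first $R$ so that $R^{-\eps}+\eta_R<\delta$ and then $|x|$ large enough to kill the $o_R$ term yields the claim.

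The main obstacle is the tail bound on $I_2$: the log singularity of $\Phi$ at $0$ and the non-integrability of $|z|^{-1/2}$ at infinity mean one cannot apply Young's inequality directly, and one must juggle three regimes of $|x-y|$ against the fixed decay $|y|^{-2-\eps}$ of $f$ to squeeze out the factor $|x|^{-1/2}$ with a coefficient that can be made arbitrarily small. Once this is achieved, the expansion on $I_1$ is a routine stationary-phase-style Taylor expansion.
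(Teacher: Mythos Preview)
Your argument is correct and follows the same strategy as the paper's proof: localise the convolution, use the decay of $f$ to control the tail, and Taylor-expand the phase $|x-y|$ on the bounded piece to extract the leading oscillation $e^{i|x|-i\hat x\cdot y}$. The only structural difference is in the localisation scale. The paper splits $\R^2$ into the three regions $\{|x-y|\le 1\}$, $\{|x-y|>1,\ |y|\ge\sqrt{|x|}\}$, and $\{|x-y|>1,\ |y|\le\sqrt{|x|}\}$, i.e.\ it uses an $x$-dependent threshold $\sqrt{|x|}$; this makes every error term decay like an explicit negative power of $|x|$ (the paper obtains $O(|x|^{-1/2-\eps/4})$), so no final $\delta$-argument is needed. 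Your version with a fixed $R$ followed by $R\to\infty$ trades that explicit rate for a slightly more elementary bookkeeping (you avoid quoting the Alsholm--Schmidt integral lemma the paper cites for its $I_2$). Both organisations lead to the same three basic estimates---the log-singularity piece, the intermediate $|x-y|^{-1/2}$ piece against $|f(y)|$, and the far piece where $|x-y|^{-1/2}\le C|x|^{-1/2}$---so the difference is cosmetic.
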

  \begin{proof}
  	Consider first for $x\in\R^2$ with $|x|\geq 2$,
  	$$
  	I_1(x)=\int\limits_{B_1(x)}\Phi(x-y)f(y)\, dy.
  	$$
  	From \eqref{eq:4}, we see that $|\Phi(z)|\leq C_0(1+|\log|z||)$ for all $|z|\leq 1$,
  	and we can write
  	\begin{align*}
  	|I_1(x)|&\leq C_0\kappa\int\limits_{B_1(x)}(1+|\log|x-y||)|y|^{-2-\eps} \, dy\\
  	&\leq C_0\kappa\left(\frac{|x|}{2}\right)^{-2-\eps}\int_{B_1(0)}(1+|\log|y||)\, dy,
  	\end{align*}
  	where the last integral is finite. In particular, $I_1(x)=o(|x|^{-\frac12})$ as $|x|\to\infty$.
  	Next, let  $A(x)=\{y\in\R^2\, :\, |x-y|>1\text{ and }|y|\geq\sqrt{|x|}\}$ and consider
  	$$
  	I_2(x)=\int\limits_{A(x)}\Phi(x-y)f(y)\, dy.
  	$$
  	The estimate \eqref{eq:4} implies $|\Phi(z)|\leq C_0|z|^{-\frac12}$ for all $|z|>1$, and therefore
  	\begin{align*}
  	|I_2(x)|&\leq C_0\int\limits_{A(x)}|x-y|^{-\frac12}|f(y)|\ dy\\
  	&\leq C_0|x|^{-\frac12}\int\limits_{A(x)}\left(1+|x-y|^{-\frac12}|y|^{\frac12}\right)|f(y)|\, dy\\
  	&\leq C_0|x|^{-\frac12}\Bigl(\int\limits_{A(x)}|f(y)|\, dy+\kappa 
  	\int\limits_{A(x)}|x-y|^{-\frac12}|y|^{-\frac32-\eps}\, dy\Bigr).
  	\end{align*}
  	Since $f\in L^1(\R^2)$, the first integral on the last line goes to zero uniformly 
  	as $|x|\to\infty$. The same is true for the second integral, since 
  	$A(x)\subset\R^2\backslash B_1(0)$ and since $-\frac12+(-\frac32-\eps)<-2$ 
  	(see, e.g., \cite[Appendix 2, Lemma 1]{alsholm-schmidt70}). 
  	Hence, $I_2(x)=o(|x|^{-\frac12})$ as $|x|\to\infty$.
  	Concerning the remaining integral
  	$$
  	I_3(x)=\int\limits_{D(x)}\Phi(x-y)f(y)\, dy,
  	$$
  	where $D(x)=\{y\in\R^2\, :\, |x-y|>1\text{ and }|y|\leq\sqrt{|x|}\}$, 
  	we can write using \eqref{eq:3}, 
  	\begin{equation}\label{eqn:I3}
  	I_3(x)=\frac{e^{\frac{i\pi}{4}}}{2\sqrt{2\pi}}
  	\int\limits_{D(x)}\frac{e^{i|x-y|}}{|x-y|^{\frac12}}\Bigl(1+\delta(|x-y|)\Bigr)f(y)\,dy,
  	\end{equation}
  	where $\sup\limits_{r\geq 1}r|\delta(r)|<\infty$.
  	Furthermore, setting $\wh{x}:=\frac{x}{|x|}$ for $x\neq 0$, one finds
  	\begin{equation*} 
  	\Bigl|\ |x-y|-|x|+\wh{x}\cdot y\ \Bigr|\leq |x|^{-1}|y|^2\quad\text{ 
  	for all }x, y\in\R^2\text{ with }x\neq 0\text{ and }|y|\leq\frac{|x|}2. 
  	\end{equation*}
  	Arguing as in \cite[Proposition 2.8]{evequoz-weth-dual} and using the estimate 
  	$|f(y)|\leq\kappa|y|^{-2-\eps}$, we obtain
  	$$
  	\left|I_3(x)-\frac{1}{2\sqrt{2\pi}}\frac{e^{i|x|+i\frac{\pi}{4}}}{|x|^{\frac12}}
  	\int_{D(x)}e^{-i\wh{x}\cdot y}f(y)\, dy\right|
	\leq \tilde{\kappa} |x|^{-\frac12-\frac{\eps}{4}},
  	$$
  	for some constant $\tilde{\kappa}>0$. 
  	Putting together the estimates for $I_1$, $I_2$ and $I_3$ and using the integrability of $f$,
  	we deduce that
  	$$
  	\cR f(x)=I_1(x)+I_2(x)+I_3(x)
  	=\sqrt{\frac{\pi}{2}}\frac{e^{i|x|+i\frac{\pi}{4}}}{|x|^{\frac12}}\wh{f}(\wh{x}) 
  	+o(|x|^{-\frac12}), \quad\text{as }|x|\to\infty,
  	$$
  	and this concludes the proof.
  \end{proof}
  
  The next result gives an upper bound for the decay of solutions of convolution equations involving
  a kernel with the asymptotic properties of $\Phi$. Combined with the regularity result below, it will
  provide a decay bound for solutions of the nonlinear problem \eqref{eqn:nlh_integral}.
  \begin{lemma}\label{lem:asympt_fct}
  	Let $u, V$: $\R^2$ $\to$ $\R$ be measurable functions 
  	satisfying $V\in L^q(\R^2)$, $Vu\in L^{\tilde q}(\R^2)$, where $1<q, \tilde q<\frac43$. 
  	If $u=K\ast(Vu)$ and 
  	$$
  	|K(x)|\leq C_0\min\{1+\bigl| \log|x| \bigr|, |x|^{-\frac12}\}\quad\text{for } x\neq 0,
  	$$
  	then there exists a constant $C>0$ such that 
  	$|u(x)|\leq C|x|^{-\frac12}$ for all $x\neq 0$.
  \end{lemma}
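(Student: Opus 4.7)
The plan is to first show that $u\in L^\infty(\R^2)$ by Young's inequality, and then establish the pointwise decay by decomposing the convolution integral into three pieces that exploit the two regimes of the kernel bound.

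For the boundedness step, the estimate on $K$ places $K\in L^r(\R^2)$ for every $r>4$: the logarithmic singularity at the origin is locally in $L^r$ for all $r<\infty$, while the bound $|K(z)|\leq C_0|z|^{-1/2}$ at infinity is $L^r$-integrable precisely when $r>4$. Since $\tilde q<4/3$ gives $\tilde q'>4$, Young's inequality applied to $u=K\ast(Vu)$ yields $\|u\|_\infty\leq\|K\|_{\tilde q'}\|Vu\|_{\tilde q}$. For $|x|\leq 1$ the conclusion $|u(x)|\leq\|u\|_\infty\leq\|u\|_\infty|x|^{-1/2}$ is immediate from the $L^\infty$ bound, so it remains to treat $|x|$ large.

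For $|x|\geq 2$, write $u(x)=J_1(x)+J_2(x)+J_3(x)$, the integrals of $K(x-y)V(y)u(y)$ over the regions $\{|y|\leq|x|/2\}$, $\{|x-y|\leq 1\}$, and $\{|y|>|x|/2,\,|x-y|>1\}$, respectively. On $J_1$, the inequality $|x-y|\geq|x|/2\geq 1$ gives $|K(x-y)|\leq C|x|^{-1/2}$, and Hölder's inequality with $V\in L^q$, $u\in L^\infty$ over $B_{|x|/2}$ produces
\[
|J_1(x)|\leq C|x|^{-1/2}\|V\|_q\|u\|_\infty|B_{|x|/2}|^{1/q'}=C|x|^{-1/2+2/q'}.
\]
On $J_2$, the logarithmic bound places $K$ in $L^{q'}(B_1)$, and since $|y|\geq|x|/2$ throughout the region, Hölder yields $|J_2(x)|\leq\|u\|_\infty\|K\|_{L^{q'}(B_1)}\|V\|_{L^q(\{|y|\geq|x|/2\})}$. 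On $J_3$, the bound $|K(z)|\leq C|z|^{-1/2}$ together with the convergence of $\int_{|z|>1}|z|^{-q'/2}\,dz$ (which requires $q'>4$, i.e.\ $q<4/3$) similarly gives $|J_3(x)|\leq C\|u\|_\infty\|V\|_{L^q(\{|y|>|x|/2\})}$.

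The main obstacle is that the factors $\|V\|_{L^q(\{|y|>R\})}$ appearing in the bounds for $J_2$ and $J_3$ only go to zero qualitatively by absolute continuity, so a single pass through the decomposition produces only $|u(x)|\leq C|x|^{-\alpha_0}$ with $\alpha_0:=\frac12-\frac{2}{q'}\in(0,\frac12)$ rather than the desired rate $\frac12$. The resolution is an iterative bootstrap: once the improved pointwise bound $|u(y)|\leq C(1+|y|)^{-\alpha_n}$ is known, reinsert it into each of $J_1,J_2,J_3$. On $J_1$, this replaces $\|u\|_\infty|B_{|x|/2}|^{1/q'}$ by $\|(1+|\cdot|)^{-\alpha_n}\|_{L^{q'}(B_{|x|/2})}$, which becomes globally $L^{q'}$-integrable as soon as $\alpha_n>2/q'$, producing $|J_1(x)|\leq C|x|^{-1/2}$. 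On $J_2$ and $J_3$, the restriction $|y|\geq|x|/2$ transfers the pointwise decay of $u$ at $y$ into a factor $(1+|x|)^{-\alpha_n}$; exploiting the smallness of $\|V\|_{L^q(\{|y|>|x|/2\})}$ via a decomposition $V=V_\delta+V^\delta$ with $V_\delta$ compactly supported and $\|V^\delta\|_q<\delta$ then allows one to absorb the $J_2,J_3$ contribution and strictly improve the rate. Finitely many iterations saturate at rate $\frac12$, giving the claim.
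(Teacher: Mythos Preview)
Your overall plan---boundedness via Young, then pointwise decay via a splitting of the convolution---is sound, but the bootstrap step has a real gap. You assert that ``a single pass through the decomposition produces $|u(x)|\leq C|x|^{-\alpha_0}$'', yet your own bounds on $J_2,J_3$ are merely $o(1)$ (from $\|V\|_{L^q(\{|y|>|x|/2\})}\to 0$), so after one pass you obtain only $u(x)=o(1)$, not a power rate. More seriously, the inductive improvement is circular as written: feeding $|u(y)|\le C_n(1+|y|)^{-\alpha_n}$ into $J_2,J_3$ returns $C\delta\,C_n(1+|x|)^{-\alpha_n}$, i.e.\ the \emph{same} rate with a small constant. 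Multiplying through by $(1+|x|)^{\alpha_{n+1}}$ with $\alpha_{n+1}>\alpha_n$ leaves an unbounded factor $(1+|x|)^{\alpha_{n+1}-\alpha_n}$ on the right, so you cannot ``absorb and strictly improve the rate'' in the usual sense; standard absorption also presupposes that $\sup_x(1+|x|)^{\alpha_{n+1}}|u(x)|$ is already finite, which is exactly the unknown. One honest repair is to set $g(r)=\sup_{|y|>r}|u(y)|$, choose $R$ with $\|K\|_{q'}\|V\|_{L^q(M_R)}<\tfrac12$, and derive the recursion $g(2r)\leq Cr^{-\beta}+\tfrac12\,g(r)$ for $r>R$, then iterate dyadically in $r$; but this mechanism must be stated.

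The paper avoids the rate bootstrap by a different organisation. It fixes a single radius $R$ with $\sup_x\int_{|y|>R}\wt K(x-y)|V(y)|\,dy<\tfrac14$ and runs a Zemach--Odeh Neumann series $u=\sum_{k\ge 0}u_k$ on $\{|x|>R\}$, where $u_0(x)=\int_{|y|<R}K(x-y)V(y)u(y)\,dy$ and $u_{k}(x)=\int_{|y|>R}K(x-y)V(y)u_{k-1}(y)\,dy$. Because the inner integral in $u_0$ is over the \emph{fixed} ball $B_R$ (not $B_{|x|/2}$), one gets $|u_0(x)|\leq C|x|^{-1/2}$ immediately from $Vu\in L^1(B_R)$; the same splitting then shows the iteration operator contracts by a factor $1/\sqrt{2}$ in the weighted norm $\sup_{|x|>R}|x|^{1/2}|f(x)|$, whence $\sum_k\sup|x|^{1/2}|u_k(x)|<\infty$. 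The key advantage over your scheme is that each $u_k$ is built from $u_{k-1}$ rather than from $u$ itself, so no a~priori finiteness of a weighted norm of $u$ is ever needed.
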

  \begin{proof}
  	Let $B_R:=B_R(0)$ and $M_R:=\R^2\backslash B_R$ for $R>0$, and define $\wt{K}(x)=C_0\min\{1+\bigl| \log|x| \bigr|, |x|^{-\frac12}\}$ for $x\neq 0$. H\"older's inequality, then gives
  	\begin{align*}
  	&\int_{M_R}\wt{K}(x-y)|V(y)|\, dy
  	\leq C_0\int_{M_R}|V(y)|\min\{1+ \bigl|\log |x-y| \bigr|, |x-y|^{-\frac12} \}\, dy\\
  	&\leq C_0\left(\int_{M_R}|V(y)|^q\right)^\frac1q\Bigl(\int_{B_1(0)}(1+ \bigl|\log |y|\bigr|)^{q'}\, dy
  	+\int_{\R^2\backslash B_1(0)}|y|^{-\frac{q'}{2}}\, dy\Bigr)^{\frac{1}{q'}},
  	\end{align*}
  	which, as $R\to\infty$, tends to $0$ uniformly in $x$, since $4<q'<\infty$.
  	Hence, we may fix $R>1$ such that
  	\begin{equation}\label{eqn:VR}
  	\sup_{x\in\R^2}\int_{M_R}\wt{K}(x-y)|V(y)|\, dy<\frac14.
  	\end{equation}
  	The decay estimate on $u$ will follow with the help of an iteration procedure
  	similar to the one of Zemach and Odeh \cite{zemach-odeh60}. 
  	For $|x|\geq R$ we set
  	$$
  	u_0(x)=\int_{B_R}K(x-y)V(y)u(y)\, dy, \qquad B_0(x)=\int_{M_R}K(x-y)V(y)u(y)\, dy,
  	$$
  	and define inductively for $k\geq 1$,
  	$$
  	u_k(x)=\int_{M_R}K(x-y)V(y)u_{k-1}(y)\, dx,
  	\text{ } B_k(x)=\int_{M_R}K(x-y)V(y)B_{k-1}(y)\, dx.
  	$$
  	Thus, for each $m\in\N$,
  	$$
  	u=\sum_{k=0}^m u_k + B_m.
  	$$
  	Since $Vu\in L^{\tilde q}(\R^2)$ with $1<\tilde q<\frac43$, and $|K(x)|\leq\wt{K}(x)$ for all $x$, 
  	we find that $\beta_0:=\sup\limits_{|x|>R}|B_k(x)|<\infty$. 
  	Moreover, setting $\beta_k:=\sup\limits_{|x|>R}|B_k(x)|$, \eqref{eqn:VR} yields 
  	$\beta_k\leq \frac14\beta_{k-1}$, for all $k\geq 1$,
  	and therefore $\beta_k\to 0$, as $k\to\infty$. This gives
  	$$
  	u=\sum\limits_{k=0}^\infty u_k,\quad\text{ uniformly in }M_R.
  	$$
  	Moreover, since $u_0\in L^\infty(\R^2)$ and $Vu\in L^1(B_R)$, we have $\mu_0:=\sup\limits_{|x|\geq R}|x|^\frac12 |u_0(x)|<\infty$.
  	Setting, for $k\geq 1$, $\mu_k:=\sup\limits_{|x|\geq R}|x|^\frac12 |u_k(x)|$, and noticing that
  	$\wt{K}(z)=C_0|z|^{-\frac12}$ for all $|z|\geq 1$, we obtain
  	\begin{align*}
  	|x|^{\frac12}|u_k(x)|
  	&\leq \mu_{k-1} |x|^{\frac12}\int_{M_R}\wt{K}(x-y)|V(y)|\ |y|^{-\frac12}\, dy \\
  	&\leq \mu_{k-1} |x|^{\frac12}\Bigl[C_0\Bigl(\frac{|x|}2\Bigr)^{-\frac12}\int_{R\leq |y|\leq\frac{|x|}2}|V(y)|\ |y|^{-\frac12}\, dy \\
  	&+\Bigl(\frac{|x|}2\Bigr)^{-\frac12}\int_{|y|\geq\max\{R,\frac{|x|}2\}}\wt{K}(x-y)|V(y)|\, dy\Bigr] \\
  	& \leq 2\sqrt{2}\ \mu_{k-1} \sup_{z\in\R^2}\int_{M_R}\wt{K}(z-y)|V(y)|\, dy,
  	\end{align*}
  	for all $|x|\geq R$, and from \eqref{eqn:VR}, we deduce that $\mu_k\leq \frac{\mu_{k-1}}{\sqrt{2}}$. 
  	As a consequence,
  	$$
  	\sup_{|x|\geq R}|x|^{\frac12}|u(x)|\leq \sum_{k=0}^\infty \mu_k \leq 
  	\mu_0 \sum_{k=0}^\infty 2^{-\frac{k}{2}}<\infty,
  	$$
  	and this concludes the proof.
  \end{proof}
  
  The last preliminary step towards the proof of Theorem~\ref{thm:far-field-nonlin} consists in 
  establishing regularity properties for the solutions of \eqref{eqn:nlh_integral}. This will also
  prove the first part of Theorem~\ref{thm:far-field-nonlin}.
  \begin{lemma}\label{lem:regularity}
  	Let $6\leq p<\infty$, $Q\in L^\infty(\R^2)$ and consider a solution 
  	$u\in L^p(\R^2)$ of \eqref{eqn:nlh_integral}. Then $u$ is a strong solution of \eqref{eq:33b}
  	and it belongs to $W^{2,q}(\R^2)$ for all $6\leq q<\infty$, if $p=6$, and all $4<q<\infty$, if $p>6$.
  \end{lemma}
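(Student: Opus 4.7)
The plan is to bootstrap the integrability of $u$ via Theorem \ref{thm:gut6} and then apply the bounded Bessel resolvent $(1-\Delta)^{-1}\colon L^q(\R^2)\to W^{2,q}(\R^2)$, $1<q<\infty$, to the identity $(-\Delta+1)u = 2u + f$, where $f:=Q|u|^{p-2}u$.

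Step 1 (integrability). Since $u\in L^p(\R^2)$ and $Q\in L^\infty(\R^2)$ with $p\geq 6$, we have $f\in L^{p'}(\R^2)$ with $p'=p/(p-1)\leq 6/5$. Applying Theorem \ref{thm:gut6} with $t=p'$ will yield $u=\mR(f)\in L^q(\R^2)$ for every $q>4$ satisfying $q\geq 3p/(p-3)$. For $p=6$ this is exactly the claimed range $q\in[6,\infty)$. For $p>6$ I will interpolate with the original $u\in L^p$ (using $3p/(p-3)\leq p$) to obtain $u\in L^q(\R^2)$ for $q\in[\max(4,3p/(p-3)),\infty]$; in particular $u\in L^\infty(\R^2)$. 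To reach all $q>4$ when $6<p<12$, I will iterate: given $u\in L^{q_0}\cap L^\infty(\R^2)$, Hölder's inequality and interpolation place $f$ in $L^t(\R^2)$ for every $t\geq\max(1,q_0/(p-1))$, and Theorem \ref{thm:gut6} applied with such $t$ strictly lowers the admissible lower endpoint (via $q_0\mapsto 3q_0/(3(p-1)-2q_0)$ when $q_0\geq p-1$, and directly to $q>4$ via $t=1$ once $q_0<p-1$). After finitely many iterations this gives $u\in L^q(\R^2)$ for all $q\in(4,\infty]$.

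Step 2 (PDE and $W^{2,q}$-regularity). For real $g\in\cS(\R^2)$, the fundamental-solution property of $\Phi$ yields $-\Delta\mR(g)-\mR(g)=g$ in $\cS'(\R^2)$. By density of $\cS(\R^2)$ in $L^{p'}(\R^2)$ and the continuity of $\mR\colon L^{p'}(\R^2)\to L^p(\R^2)$ from Theorem \ref{thm:gut6}, passing to the limit will give $-\Delta u - u = f$ as tempered distributions, equivalently $(1-\Delta)u = 2u + f$. By Step 1 both $u$ and $f$ lie in $L^q(\R^2)$ for every admissible $q$ (with $f$ inheriting its integrability from $u\in L^q\cap L^\infty$), so the right-hand side lies in $L^q(\R^2)$. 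Since $(1-\Delta)^{-1}$ is a bounded isomorphism $L^q(\R^2)\to W^{2,q}(\R^2)$ for $1<q<\infty$ and $1-\Delta$ is injective on $\cS'(\R^2)$, I conclude $u=(1-\Delta)^{-1}(2u+f)\in W^{2,q}(\R^2)$. Since $q>2$, $W^{2,q}\hookrightarrow C^1$, so the equation $-\Delta u - u = Q|u|^{p-2}u$ holds pointwise a.e.\ and $u$ is a strong solution of \eqref{eq:33b}.

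The main technical point will be the bootstrap in Step 1 when $p\in(6,12)$: a single pass of Theorem \ref{thm:gut6} only reaches $q\geq 3p/(p-3)\in(4,6)$, short of the target $q>4$. The iteration must carefully couple the newly obtained $L^q$-bounds with the already available $L^\infty$-bound so that after a controlled number of steps the admissible lower endpoint descends below $4$, at which point the built-in constraint $q>4$ in Theorem \ref{thm:gut6} takes over.
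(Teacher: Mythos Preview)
Your proposal is correct and follows essentially the same route as the paper: bootstrap the integrability of $u$ via Theorem~\ref{thm:gut6} and then invoke elliptic regularity (the paper cites Gilbarg--Trudinger Chapter~9 instead of the Bessel resolvent $(1-\Delta)^{-1}$, and tracks $1/r_m=(p-1)(1/r_{m-1}-2/3)$ for the integrability of $f$, which is exactly your $q_0$-iteration rewritten). Two small remarks: the interpolation with the original $u\in L^p$ is unnecessary since Theorem~\ref{thm:gut6} with $t=p'$ already delivers $q=\infty$, and the branch ``$q_0<p-1$, use $t=1$'' is never actually reached, because the iteration terminates (via the built-in constraint $q>4$) as soon as $q_0\le 12(p-1)/11$, which is larger than $p-1$.
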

  \begin{proof}
  	Since $Q\in L^\infty(\R^2)$ and $6\leq p<\infty$, we find that $f:=Q|u|^{p-2}u$ belongs
  	to $L^{p'}(\R^2)$ and that $1<p'\leq\frac65$. Hence, Theorem \ref{thm:gut6} gives 
  	$u\in L^6(\R^2)\cap L^\infty(\R^2)$, and since $p-1\geq 5$, this means that
  	$f$ belongs to $L^{\frac65}(\R^2)\cap L^\infty(\R^2)$. Thus, we obtain from elliptic estimates
  	(see \cite[Chapter 9]{gilbarg-trudinger})
  	that $u\in W^{2,\sigma}(\R^2)$ for all $6\leq\sigma<\infty$. 
  	This proves the lemma in the case $p=6$.
  	
  	Assuming next $p>6$, we claim that $u\in L^q(\R^2)$ for all $4<q\leq\infty$. 
  	Supposing for the moment that it is true, we find $f=Q|u|^{p-2}u\in L^t(\R^2)$ 
  	for all $\frac{4}{p-1}<t\leq\infty$, where $p-1>1$. 
  	Hence, we get from elliptic estimates that $u\in W^{2,q}(\R^2)$ for all $4<q<\infty$ 
  	and the lemma is proved.
  	
  	We now prove the claim. As a consequence of the first step, 
  	we find that $f\in L^{p'}(\R^2)\cap L^\infty(\R^2)$. 
  	Applying Theorem~\ref{thm:gut6} with $t=p'<\frac65$,
  	and using the fact that $u\in L^\infty(\R^2)$,
  	we obtain that $u\in L^q(\R^2)$ for all $4<q\leq \infty$ 
  	such that $\frac1q\leq \frac1{p'}-\frac23$.
  	If $\frac1{p'}-\frac23\geq \frac14$, the claim follows. 
  	Otherwise, we iterate the argument and find that
  	$f\in L^t(\R^2)$ for all $1\leq t\leq\infty$ such that $\frac1t\leq(p-1)(\frac1{p'}-\frac23)$. 
  	Let $\frac1{r_1}=(p-1)(\frac1{p'}-\frac23)$ and remark that $r_1<p'$, since $p'<\frac65$. 
  	Applying Theorem~\ref{thm:gut6} with $t=r_1$, we obtain $u\in L^q(\R^2)$ for all $4<q\leq \infty$ 
  	such that $\frac1q\leq \frac1{r_1}-\frac23$. Iterating the procedure we find at each step
  	$u\in L^q(\R^2)$ for all $4<q\leq\infty$ such that $\frac1q\leq\frac1{r_m}-\frac23$, 
  	where $r_m$ is given by 
  	$$
  	r_0=p', \quad \frac1{r_m}=(p-1)\left(\frac1{r_{m-1}}-\frac23\right), m\geq 1.
  	$$
  	It satisfies $\frac1{r_m}-\frac1{r_{m-1}}=2(p-1)^m\left(\frac16-\frac1{p}\right)>0$ 
  	and therefore $\frac1{r_m}\to\infty$ as $m\to\infty$.
  	Thus, after finitely many iterations, we obtain $\frac1{r_m}-\frac23\geq \frac14$ 
  	and the claim follows.
  \end{proof}
  
 With the help of the above results, we can now give the proof of our first main theorem.
  \begin{proof}[Proof of Theorem~\ref{thm:far-field-nonlin}]
  	As remarked above, the regularity was already proved in Lemma~\ref{lem:regularity}. 
  	Restricting to the case where $p>6$, we see that the functions $Q|u|^{p-2}$ and $Q|u|^{p-2}u$ 
  	belong to $L^{\frac65}(\R^2)$, since $u\in L^q(\R^2)$ for all $q>4$ by Lemma~\ref{lem:regularity}.
  	Thus, Lemma~\ref{lem:asympt_fct} with $q=\tilde{q}=\frac65$, $K=\text{Re}(\Phi)$ and $V=Q|u|^{p-2}$
  	ensures that $u(x)=O(|x|^{-\frac12})$ as $|x|\to\infty$. Therefore, $f(x)=O(|x|^{-\frac{p-1}{2}})$ 
  	as $|x|\to\infty$. Since $\frac{p-1}{2}>2$, the expansion \eqref{eqn:far-field-u-point} follows 
  	from Proposition~\ref{prop:farfield_N}, after taking real parts.
  \end{proof}
  
  \bigskip
  \section{Variational setting in the plane and existence of solutions for the nonlinear problem}\label{sec:variational}
  In order to prove Theorem~\ref{thm:exist}, we extend the dual variational method developed 
  in \cite{evequoz-weth-dual} to the space dimension $2$.
  Let therefore $Q\in L^\infty(\R^2)\backslash\{0\}$ be a nonnegative function and consider 
  for $6\leq p<\infty$ the energy functional
  \begin{equation*}
  J\, :\ L^{p'}(\R^2)\ \to\ \R,\quad J(v)
  =\frac{1}{p'}\int_{\R^2}|v|^{p'}\, dx 
  - \frac12 \int_{\R^2}Q(x)^\frac1pv(x) \mR(Q^\frac1p v)(x)\, dx,
  \end{equation*}
  In this section, we consider, unless explicitly stated, real-valued functions and denote 
  by $\mR$ the real part of the resolvent operator $\cR$.
  Obviously, $J(-v)=J(v)$ for all $v\in L^{p'}(\R^2)$. Moreover, one can show that $J$ 
  is of class $C^1$ and that every critical point of $J$ corresponds to a solution 
  of \eqref{eqn:nlh_integral} in the following way.
  
  A function $v\in L^{p'}(\R^2)$ satisfies $J'(v)=0$ if and only if it solves the integral equation
  $|v|^{p'-2}v=Q^\frac1p \mR(Q^\frac1p v)$. Setting 
  $$
  u=\mR(Q^\frac1p v)\in L^p(\R^2),
  $$ 
  it follows that $u= \mR(Q|u|^{p-2}u)$, i.e., $u$ solves \eqref{eqn:nlh_integral}. 
  Note that, by Lemma~\ref{lem:regularity}, $u$ is then a strong solution of \eqref{eq:33b}.
  
  As a consequence of Theorem \ref{thm:gut6}, 
  the Birman-Schwinger type operator $\mK$: $L^{p'}(\R^2)$ $\to$ $L^p(\R^2)$ given by
  $$
  \mK v:=Q^\frac1p \mR(Q^\frac1p v), \quad v\in L^{p'}(\R^2),
  $$
  and appearing in the quadratic part of $J$, is continuous for $6\leq p<\infty$ and
  has compactness properties which will be important in the sequel.
  More precisely, the operator $1_B\mK$: $L^{p'}(\R^2)$ $\to$ $L^p(\R^2)$ is compact
  for every bounded and measurable set $B\subset\R^2$, and if,
  in addition, $Q(x)\to 0$ as $|x|\to\infty$, then $\mK$ itself is compact.
  Here, $1_B$ denotes the characteristic function of the set $B\subset\R^2$.
  
  To see this, consider a sequence $(v_n)_n$ converging weakly to $0$ in $L^{p'}(\R^2)$ and
  choose $R>0$ such that $B\subset B_R:=B_R(0)$. Due to the continuity of the resolvent,
  there holds $\mR(Q^{\frac1p}v_n)\weakto 0$ in $L^p(\R^2)$, and elliptic estimates ensure the
  boundedness of $\mR(Q^{\frac1p}v_n)$ in the Sobolev space $W^{2,p'}(B_R)$. Since $p<\infty$, the
  embedding $W^{2,p'}(B_R)\embed L^p(B_R)$ is compact, and since $Q$ is bounded and $B\subset B_R$, 
  we conclude that $1_B\mK v_n=1_BQ^{\frac1p}\mR(Q^{\frac1p}v_n)\to 0$, strongly in $L^p(\R^2)$,
  and obtain the first compactness property.
  In the case where $Q(x)\to 0$ as $|x|\to\infty$, the continuity of $\mR$ and the estimate
  $$
  \|(1-1_{B_R})\mK v_n\|_p\leq  \operatorname*{ess\, sup}\limits_{|x|\geq R}Q^{\frac1p}(x)\ \|\mR(Q^{\frac1p}v_n)\|_p,
  $$
  which holds for every $R>0$, ensure the convergence $\|(1-1_{B_R})\mK v_n\|_p\to 0$ as $R\to\infty$,
  uniformly in $n$. On the other hand, as we have already seen, $\|1_{B_R}\mK v_n\|_p\to 0$ as $n\to\infty$,
  for every $R>0$. Combining these two facts yields the strong convergence $\mK v_n\to 0$ in $L^p(\R^2)$
  and hence the compactness of $\mK$.

  We start by proving the existence of an unbounded sequence of solutions  in the case where
  $Q(x)\to 0$ as $|x|\to\infty$.
  \begin{proof}[Proof of Theorem~\ref{thm:exist}(a)]
  	The result will follow from the symmetric Mountain Pass Theorem (see \cite{ambrosetti-rabinowitz73} 
  	for the original work, and \cite{ghoussoub} for the version we use here) applied to 
  	the even functional $J$.
  	
  	For this, we first show that $0$ is a strict local minimum or, 
  	more precisely, that $J(v)\geq \delta>0$ for all $\|v\|=\rho$, provided $\rho>0$ is small.
  	Indeed, the operator $\mK$ being continuous, there exists a constant $C>0$ such that 
  	$\|\mK v\|_p\leq C\|v\|_{p'}$ for all $v\in L^{p'}(\R^2)$.
  	Hence, if $\|v\|_{p'}=\rho>0$, we obtain
  	$$
  	J(v)= \frac{1}{p'}\rho^{p'}-\frac12\int_{\R^2}v\mK v\, dx 
  	\geq \frac{1}{p'}\rho^{p'}-\frac{C}{2}\rho^2>0
  	$$
  	for all $\rho>0$ small enough, since $p'<2$.

  	In the next step, we prove for each integer $m$ the existence of 
  	an $m$-dimensional subspace $\cW_m$ of $L^{p'}(\R^2)$ and of a radius $R_m>0$ 
  	with the property that $J(v)\leq 0$ for all $v\in\cW_m$ such that 
  	$\|v\|_{p'}\geq R_m$. 
  	
  	Let $m$ be any integer. Since $Q \in L^\infty(\R^2)\backslash \{0\}$ is nonnegative, 
  	there is a point $x_0\in\R^2$ of metric density $1$ for the set $\{Q>0\}$. 
  	Hence, there is $0<\delta<1$ such that
  	\begin{equation}\label{eq:21}
  	|B_\delta(x_0)\cap\{Q>0\}| \geq  \frac12 |B_\delta(x_0)|.
  	\end{equation} 
  	Since $\text{Re}(\Phi)$ is bounded outside of every neighborhood of 
  	zero, by \eqref{eq:4}, and since 
  	$\text{Re}(\Phi(x))\approx \frac1{2\pi}\log(\frac2{|x|})\to +\infty$ 
  	as $|x| \to 0$, by \eqref{eq:3}, 
  	we may also assume that $\delta>0$ satisfies for 
  	$\Psi^*(\tau):=\inf\limits_{B_{\tau}(0)\setminus \{0\}}\text{Re}(\Phi)$ and
  	$\Psi_*(\tau):=\|\text{Re}(\Phi)\|_{L^\infty(\R^2 \backslash B_{\tau}(0))}$ the property,
  	\begin{equation}\label{eq:19}
  	\Psi^*(\sigma^m)>(m-1)\Psi_*(\sigma) \qquad \text{for $\sigma \in (0,\delta]$.}
  	\end{equation}
  	Let $\sigma:=\frac{\delta}{4\sqrt{m}}$, $\tau:=\frac12\sigma^m$ and
  	choose $m$ disjoint open balls $B^1,\dots,B^m \subset B_\delta(x_0)$ 
  	as follows.
  	By \eqref{eq:21}, we can choose $x_1\in B_\delta(x_0)\cap\{Q>0\}$ and $\tau_1\in(0,\tau]$ 
  	such that $B^1:=B_{\tau_1}(x_1)\subset B_{\delta}(x_0)$ and $|B^1\cap\{Q>0\}|>0$.
  	Let now $\omega_1:=(B_\delta(x_0)\cap\{Q>0\})\backslash B_{2\tau_1+\sigma}(x_1)$ and
  	observe that
  	$$
  	|\omega_1|\geq \frac12 |B_\delta(x_0)|-|B_{2\sigma}(x_1)|
  	\geq \left(\frac12-\frac1{4m}\right)\pi\delta^2>0.
  	$$
  	Thus we may choose $x_2\in\omega_1$ and $\tau_2\in(0,\tau]$ such that 
  	$B^2:=B_{\tau_2}(x_2)\subset B_{\delta}(x_0)$ and $|B^2\cap\{Q>0\}|>0$.
  	Inductively, we let for $2\leq k\leq m-1$, 
  	$\omega_k:=(B_\delta(x_0)\cap\{Q>0\})\backslash 
  	\bigcup\limits_{i=1}^{k}B_{2\tau_i+\sigma}(x_i)$ and remark that
  	$$
  	|\omega_k|\geq \frac12|B_\delta(x_0)|-\sum_{i=1}^k|B_{2\sigma}(x_i)|
  	\geq \left(\frac12-\frac{k}{4m}\right)\pi\delta^2>0.
  	$$
  	Therefore, we may choose $x_{k+1}\in\omega_k$ and $\tau_{k+1}\in(0,\tau]$
  	such that $B^{k+1}:=B_{\tau_{k+1}}(x_{k+1})\subset B_{\delta}(x_0)$ and 
  	$|B^{k+1}\cap\{Q>0\}|>0$. Notice that, by construction,
  	\begin{equation}\label{eq:19bis}
  	\text{diam}B^i\leq \sigma^m\;\text{ and }\;
  	\dist(B^i,B^j):= \inf\{|x-y|\::\:x \in B^i, \:y \in B^j\} \geq \sigma
  	\end{equation}
  	for all $i\neq j$. 
  	Let us now fix $z_1, \ldots, z_m \in C_c^\infty(\R^2)$
  	such that $z_i>0$ in $B^i$ and $z_i = 0$ in $\R^2 \backslash B^i$.
  	We define $\cW_m$ as the subspace spanned by $z_1, \ldots, z_m$.
  	Writing any $v \in \cW_m \backslash \{0\}$ as $v= \sum \limits_{i=1}^m a_i z_i$ 
  	with $a= (a_1,\dots,a_m) \in \R^m \backslash \{0\}$,
  	it follows from \eqref{eq:21}, \eqref{eq:19} and \eqref{eq:19bis} that
  	\begin{align*}
  	\int_{\R^2} v \mK v\, dx&=\sum_{i,j=1}^m a_i a_j \int_{B^i}\int_{B^j} 
  	\text{Re}(\Phi)(x-y)Q(x)^\frac1p Q(y)^\frac1p z_i(x)z_j(y)\,dxdy \\
  	&\geq \Psi^*(\sigma^m)  \sum_{i=1}^m a_i^2 \Bigl(\int_{B^i} Q(x)^\frac1p
  	z_i(x)\,dx\Bigr)^2\\
  	&- \Psi_*(\sigma) \sum_{\stackrel{i,j=1}{i \not= j}}^m |a_i| |a_j| 
  	\Bigl(\int_{B^i} Q(x)^\frac1p z_i(x)\,dx\Bigr) 
  	\Bigl(\int_{B^j} Q(x)^\frac1p z_j(x)\,dx\Bigr)\\
  	&\geq \sum_{i=1}^m \Bigl(\Psi^*(\sigma^m) -(m-1)\Psi_*(\sigma) \Bigr)a_i^2
  	\Bigl(\int_{B^i} Q(x)^\frac1p z_i(x)\,dx\Bigr)^2 >0
  	\end{align*}
  	(cf. \cite[Lemma 5.1]{evequoz-weth-dual}). 
  	From the continuity of $\mK$, we obtain
  	$$
  	\mu_m:= \inf_{v \in \cW_m, \|v\|_{p'}=1}\: \int_{\R^2}v \mK v\, dx >0,
  	$$
  	and therefore,
  	$$
  	J(v)= \frac{\|v\|_{p'}^{p'}}{p'}-\frac12\int_{\R^2}v\mK v\, dx 
  	\leq  \|v\|_{p'}^{p'}\Bigl(\frac{1}{p'}- \frac12\|v\|_{p'}^{2-p'}
  	\mu_m \Bigr)\qquad \text{for } v \in \cW_m.
  	$$
  	Since $p'<2$, the assertion follows by choosing $R_m>0$ large enough.

  	The last step to apply the Mountain Pass Theorem is to check the Palais-Smale condition for $J$.
  	Let $(v_n)_n\subset L^{p'}(\R^2)$ be a Palais-Smale sequence for $J$, 
  	i.e., $(J(v_n))_n$ is bounded and $J'(v_n)\to 0$ in 
  	$L^{p'}(\R^2)^\ast$ as $n\to\infty$, we need to show that $(v_n)_n$ has a
  	subsequence converging strongly in $L^{p'}(\R^2)$. Here, the assumption $Q(x)\to 0$,
  	$|x|\to\infty$, will come into play.
  	
  	First observe that since
  	\begin{equation}\label{eqn:bnd_PS}
  	J(v_n)=\left(\frac{1}{p'}-\frac12\right)\|v_n\|_{p'}^{p'}+\frac12J'(v_n)v_n
  	\geq\left(\frac{1}{p'}-\frac12\right)\|v_n\|_{p'}^{p'}-\frac12\|J'(v_n)\|_\ast\|v_n\|_{p'}
  	\end{equation}
  	and $1<p'<2$, the sequence $(v_n)_n$ is bounded in $L^{p'}(\R^2)$.
  	Hence, there is $v\in L^{p'}(\R^2)$ such that, up to a subsequence,
  	$v_n\weakto v$ in $L^{p'}(\R^2)$ and $\|v\|_{p'}\leq\liminf\limits_{n\to\infty}\|v_n\|_{p'}$.
  	From the convexity of the function $t\mapsto |t|^{p'}$ 
  	and the compactness of the operator $\mK$, guaranteed by the assumption 
  	that $Q(x)\to 0$ as $|x|\to\infty$, it follows that $\lim\limits_{n\to\infty}\|v_n\|_{p'}=\|v\|_{p'}$,
  	and therefore $v_n\to v$ strongly in $L^{p'}(\R^2)$.
  	Hence $J$ satisfies the Palais-Smale condition.
  	
  	Since the hypotheses of the symmetric Mountain Pass Theorem are fulfilled,
  	there exists a sequence of pairs $\{\pm v_n\}$ of nontrivial critical points of $J$ 
  	with $J(v_n) \to \infty$ and thus,  $\|v_n\|_{p'} \to \infty$ as $n \to \infty$. 
  	Setting $u_n:= \mR(Q^{\frac{1}{p}} v_n)$ and using Lemma \ref{lem:regularity} concludes the proof.
  \end{proof}
  
  In the case where $Q$ is periodic, the Palais-Smale condition is in general not satisfied.
  A crucial ingredient in the proof of Theorem~\ref{thm:exist}(b) will be the following 
  nonvanishing property of the resolvent, analogue to \cite[Theorem 3.1]{evequoz-weth-dual} and reminiscent
  of the Lions compactness lemma \cite[II. Lemma I.1]{lions1}.
  It will allow us to obtain the existence, up to translations, 
  of a nontrivial weak limit for a Palais-Smale sequence of $J$. 
  We prove the result in a more general form than needed, since we believe that it will
  be useful for the study of complex-valued solutions of \eqref{eq:33b} also.
  \begin{theorem}\label{thm:nonvanishing}
  	Let $6<p<\infty$ and consider a bounded sequence $(v_n)_n\subset L^{p'}(\R^2)$ satisfying 
  	$$ \limsup \limits_{n\to\infty}\left|\,\int_{\R^2}v_n\cR v_n\, dx\right|>0.$$
  	Then there exists $R>0$, $\zeta>0$ and
  	$(x_n)_n\subset\R^2$ such that, up to a subsequence, 
  	\begin{equation}\label{eqn:liminf}
  	\int_{B_R(x_n)}|v_n|^{p'}\, dx \geq \zeta\quad\text{for all }n.
  	\end{equation}
  \end{theorem}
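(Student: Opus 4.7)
The plan is to adapt the argument of \cite[Theorem 3.1]{evequoz-weth-dual} to the planar setting by proving the contrapositive: I assume vanishing, i.e., $\sup_{x\in\R^2}\int_{B_R(x)}|v_n|^{p'}\, dx \to 0$ for every $R>0$, and deduce that $\int v_n \cR v_n\, dx \to 0$, contradicting the hypothesis. Splitting along \eqref{eqn:decomp_phi} I write $\int v_n \cR v_n\, dx = I_n^{(1)}+I_n^{(2)}$ with $I_n^{(i)}:=\int_{\R^2} v_n(\Phi_i*v_n)\,dx$ and treat the two pieces separately.

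For $I_n^{(2)}$ the decay estimate \eqref{eqn:phi2} ensures $\Phi_2\in L^r(\R^2)$ for every $r\geq 1$. Given $\eps>0$ I choose $S$ so large that $\|\Phi_2\cdot 1_{\R^2\setminus B_S(0)}\|_{p/2}<\eps$; Young's inequality then bounds the tail contribution by $C\eps\|v_n\|_{p'}^2=O(\eps)$ uniformly in $n$. For the remaining compactly supported kernel $\Phi_2\cdot 1_{B_S(0)}$ I tile $\R^2$ by cubes $\{Q_k\}$ of side $S$; the double integral decomposes into contributions from pairs of neighboring cubes only, and a local H\"older estimate together with Cauchy--Schwarz reduces it to $C\sum_k \|v_n\|_{L^{p'}(Q_k)}^2$. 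Since $p'<2$, the elementary inequality $\sum_k a_k^2\leq (\sup_k a_k)^{2-p'}\sum_k a_k^{p'}$, the vanishing assumption $\sup_k\|v_n\|_{L^{p'}(Q_k)}\to 0$, and the bound $\sum_k\|v_n\|_{L^{p'}(Q_k)}^{p'}\leq \|v_n\|_{p'}^{p'}$ together force $I_n^{(2)}\to 0$.

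The main obstacle is $I_n^{(1)}$, because $\Phi_1$ decays only like $|x|^{-1/2}$ and its Fourier transform concentrates on the unit circle, so Young-type truncation is inapplicable. Using the dyadic decomposition \eqref{eqn:dyadic_Q} together with $(\Phi_1*\varphi)*v_n=2\pi\Phi_1*v_n$, I write $I_n^{(1)}=(2\pi)^{-1}\sum_{j\geq 0} B_j(v_n,v_n)$ where $B_j(v,w):=\int v(Q^j*w)\,dx$. Since each $Q^j$ is a Schwartz function, the truncation-plus-tiling argument from the previous step, applied with $Q^j$ in place of $\Phi_2\cdot 1_{B_S(0)}$, yields $B_j(v_n,v_n)\to 0$ for every fixed $j$. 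To control the tail $\sum_{j>J}|B_j(v_n,v_n)|$ uniformly in $n$, I combine the Stein--Tomas bound \eqref{eqn:stein_tomas_phi1} and its dual (giving $|B_j(v,w)|\leq C2^{j/2}\|v\|_{6/5}\|w\|_2$ and the symmetric swap) with the trivial estimate \eqref{eqn:l1_linf_phi1} (giving $|B_j(v,w)|\leq C2^{-j/2}\|v\|_1\|w\|_1$); bilinear Riesz--Thorin interpolation, first between the two Stein--Tomas bounds to obtain $|B_j(v,v)|\leq C 2^{j/2}\|v\|_{3/2}^2$ and then with the $L^1\times L^1$ bound, yields $|B_j(v,v)|\leq C\,2^{-j(1/2-3/p)}\|v\|_{p'}^2$ on the diagonal $L^{p'}\times L^{p'}$. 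The hypothesis $p>6$ is exactly what makes the exponent $1/2-3/p$ strictly positive, so the geometric series $\sum_{j>J}2^{-j(1/2-3/p)}$ can be made arbitrarily small uniformly in $n$. Choosing $J$ first and then $n$ large completes the argument.
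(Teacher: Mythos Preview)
Your proof is correct and follows essentially the same strategy as the paper: the contrapositive via the splitting $\Phi=\Phi_1+\Phi_2$, Young's inequality plus tiling for $\Phi_2$, and the dyadic decomposition of $\Phi_1$ combined with Stein--Tomas, the $L^1$--$L^\infty$ bound, and interpolation to the exponent $\frac12-\frac3p$. The only organizational difference is that the paper packages the tail of the dyadic sum as a spatial operator estimate $\|[1_{M_R}\Phi_1]*f\|_p\le CR^{-\lambda_p}\|f\|_{p'}$ (its Claim~2, requiring an auxiliary cutoff $\chi$ to localize $\widehat{v_n}$ near the unit circle), whereas you work directly with the bilinear forms $B_j$ and sum the geometric tail $\sum_{j>J}2^{-j\lambda_p}$; your version is slightly more direct and avoids the extra cutoff.
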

  \begin{proof}
  	Recall the decomposition of the fundamental solution 
  	$\Phi=\Phi_1+\Phi_2$ introduced in \eqref{eqn:decomp_phi},
  	and the estimates \eqref{eqn:phi1} and \eqref{eqn:phi2}:
  	$$
  	|\Phi_1(x)|\leq C_0(1+|x|)^{-\frac12}\quad\text{and}\quad
  	|\Phi_2(x)|\leq C_0\min\left\{1+| \log|x| |, |x|^{-3}\right\}.
  	$$
  	The proof of the theorem consists in three claims. We first prove a variant of the conclusion
  	with $\Phi_2$ in place of $\Phi$ and for Schwartz functions. 
  	Next, a decay estimate for the convolution with $\Phi_1$ outside larger and larger balls
  	is established. It is used in the third step to obtain the conclusion of the theorem for $\Phi_1$
  	in place of $\Phi$, again for Schwartz functions.
  	
  	{\bf Claim 1}: Let $2< p < \infty$, and $(v_n)_n\subset \cS(\R^2)$ 
  	bounded in $L^{p'}(\R^2)$ satisfy
  	\begin{equation}\label{eqn:vanish_phi2}
  	\sup_{y\in\R^2}\int_{B_\rho(y)}|v_n|^{p'}\,dx\to 0 , \quad \text{as }n\to\infty, \quad\text{for all }\rho>0.
  	\end{equation}
  	Then $\int_{\R^2} v_n [\Phi_2 * v_n]\,dx \to 0$ as $n \to \infty$.
  	
  	For the proof, let $A_R:= \{x \in\R^2\::\: \frac{1}{R} \le |x| \le R\}$ and 
  	$D_R:= \R^2 \backslash A_R$ for $R >1$, and first remark that \eqref{eqn:phi2}
  	gives $\Phi_2\in L^t(\R^2)$ for all $1\leq t<\infty$. Hence, Young's inequality implies
  	\begin{equation}\label{eq:14}
  	\sup_{n \in \N} \Bigl|\int_{\R^2} v_n [(1_{D_R} \Phi_2)*v_n]\,dx\Bigr| 
  	\le\|\Phi_2\|_{L^{\frac{p}{2}}(D_R)} \sup_{n \in \N} \|v_n\|_{L^{p'}(\R^2)}^2 \to 0,
  	\text{ as }R \to \infty, 
  	\end{equation}
  	since $2<p<\infty$.
  	Next, decomposing $\R^2$ into disjoint squares $\{Q_\ell\}_{\ell\in\N}$ 
  	of side length $R$, and considering for each $\ell$ the square $Q'_\ell$ 
  	with the same center as $Q_\ell$ but with side length $3R$, we obtain by an estimate
  	similar to \cite[pp. 109-110]{alama-li92},
  	\begin{align*}
  	&\Bigl|\int_{\R^2} v_n [(1_{A_R} \Phi_2)*v_n]\,dx \Bigr|
  	\leq \sum_{\ell=1}^\infty\int_{Q_\ell}\Bigl(\int_{\frac1R<|x-y|<R}|\Phi_2(x-y)\,||v_n(x)|\, |v_n(y)|\, dy\Bigr) dx\\
  	&\qquad\leq
  	C R^{\frac4p}(1+\log R)
  	\sum_{\ell=1}^\infty\Bigl(\int_{Q'_\ell}|v_n(x)|^{p'}\,dx\Bigr)^{\frac{2}{p'}}\\
  	&\qquad\leq C R^{\frac4p}(1+\log R) 
  	\Bigl[\:\sup_{\ell \in \N}\int_{Q'_\ell}|v_n(x)|^{p'}\, dx\Bigr]^{\frac{2}{p'}-1} 
  	\:\sum_{\ell=1}^\infty  \int_{Q'_\ell}|v_n(x)|^{p'}\,dx\\ 
  	&\qquad\leq C R^{\frac4p}(1+\log R) 
  	\Bigl[\:\sup_{y \in \R^2}\int_{B_{3R\sqrt{2}}(y)}|v_n(x)|^{p'}\,  dx\Bigr]^{\frac{2}{p'}-1}
  	\: \|v_n\|_{p'}^{p'}. 
  	\end{align*}
  	The assumption \eqref{eqn:vanish_phi2} therefore gives
  	$\int_{\R^2} v_n [(1_{A_R} \Phi_2)*v_n]\,dx\to 0$, as $n\to\infty$,
  	for every $R>0$. Combining this with \eqref{eq:14}, the claim follows.

  	{\bf Claim 2}: 	Let $6<p\leq\infty$, $\lambda_p:=\frac12 -\frac3p>0$ and 
  	$M_R:= \R^N \setminus B_R$ for $R>0$. Then there exists a
  	constant $C>0$ such that, for all $R \geq 1$ and $f\in\cS(\R^2)$ with 
  	$\supp \wh{f} \subset \{\xi \::\: ||\xi|-1|\le \frac{1}{2}\}$,  
  	$$
  	\|[1_{M_R} \Phi_1 ] \ast f\|_p\leq C R^{-\lambda_p}\|f\|_{p'}.
  	$$ 
  	
  	Since $\Phi_1$ is bounded, it suffices to prove the assertion for $R\geq 4$. 
  	For this, let us replace in the decomposition \eqref{eqn:dyadic_Q}
  	the function $\Phi_1$ by $P_R:=1_{M_R}\Phi_1$. Then,
  	\begin{equation*}
  	P_R\ast\varphi= \sum \limits_{j=[\log_2 R] }^\infty Q^j \qquad \text{with 
  		$Q^j:= (P_R \phi_j)\ast\varphi$ for $j \in \N$,} 
  	\end{equation*}
  	using the fact that $P_R\phi_j=0$ for all $j$ such that $2^{j+1}\leq R$.
  	Since the asymptotic behavior of $\Phi_1$ and $P_R$ are identical,
  	the arguments used there give as in \eqref{eqn:stein_tomas_phi1},
  	$$
  	\|Q^j\ast f\|_2\leq C 2^{\frac{j}2}\|f\|_{\frac65}\quad\text{ for all }f\in\cS(\R^2),\ j\geq [\log_2R]
  	$$
  	with a constant $C>0$ independent of $j$ and $R$. By duality and interpolation
  	\begin{equation*}
  	\|Q^j \ast f\|_3 \leq C \, 2^{\frac{j}{2}} \|f\|_{\frac43}\quad\text{for all }
  	f\in\cS(\R^2),\ j\geq [\log_2 R].
  	\end{equation*} 
  	Interpolating this last estimate with the $L^1$--$L^\infty$ estimate
  	$$
  	\|Q^j \ast f\|_\infty \leq C 2^{-\frac{j}{2}}  \|f\|_1,\quad f\in\cS(\R^2),\ j\geq [\log_2 R],
  	$$
  	similar to \eqref{eqn:l1_linf_phi1}, we obtain for $p\geq 3$,
  	$$
  	\|Q^j \ast f\|_p \leq C \, 2^{j\left(\frac3p-\frac12\right)}\|f\|_{p'} 
  	= C \, 2^{-j\lambda_p}\|f\|_{p'}\quad \text{for all }f\in\cS(\R^2),\ j\geq [\log_2R].
  	$$
  	Since $P_R \ast f = \frac1{2\pi}(P_R\ast\varphi) \ast f$ for all 
  	$f\in\cS(\R^2)$ with $\supp\wh{f}\subset\{\xi \::\:\bigl||\xi|-1\bigr|\le \frac{1}{2}\}$, 
  	and since $\lambda_p>0$ for $p>6$,
  	we conclude that for all such $f$ and all $p>6$, 
  	$$
  	\|[1_{M_R} \Phi_1 ] \ast f\|_p = \frac1{2\pi}\|(P_R\ast\varphi) \ast f\|_p 
  	\le C \|f\|_{p'} \sum_{j=[\log_2  R]}^\infty 2^{-j\lambda_p} 
  	\le  C R^{-\lambda_p} \|f\|_{p'}.
  	$$
  	The claim is proved.

  	{\bf Claim 3}: Let $6<p \leq\infty$ and suppose that $(v_n)_n\subset \cS(\R^2)$ 
  	is a bounded sequence in $L^{p'}(\R^2)$ such that \eqref{eqn:vanish_phi2}
  	holds. Then $\int_{\R^2} v_n [\Phi_1 * v_n]\,dx \to 0$ as $n \to \infty$.
  	
  	To prove the claim, fix a radial function $\chi\in \cS(\R^2)$ such that
  	$\wh{\chi}\in C^\infty_c(\R^2)$ is radial, $0\leq \wh{\chi}\leq 1$,
  	$\wh{\chi}(\xi)=1$ for $| |\xi|-1|\leq \frac14$ and $\wh{\chi}(\xi)=0$
  	for $| |\xi|-1|\geq\frac12$. Moreover, let $w_n:= \chi \ast v_n$. 
  	We then have $\Phi_1 * v_n = \frac1{2\pi}\Phi_1 * w_n$, since 
  	$\supp \wh{w_n}\subset\{\xi\, :\, ||\xi|-1|\leq\frac12\}$. 
  	Hence, the decomposition
  	$$
  	\int_{\R^2} v_n [\Phi_1 * v_n]\,dx 
  	= \frac1{2\pi}\int_{\R^2} v_n [(1_{B_R} \Phi_1 ) * w_n]\,dx 
  	+ \frac1{2\pi} \int_{\R^2} v_n	[(1_{M_R} \Phi_1 ) * w_n]\,dx
  	$$
  	holds for all $n$ and using Claim 2 we obtain 
  	\begin{equation}\label{eqn:phi1_mr}
  	\sup_{n \in \N}\Bigl|\int_{\R^2} v_n [(1_{M_R} \Phi_1)  * w_n]\,dx\Bigr| \to 0,
  	\quad \text{as } R\to\infty.
  	\end{equation}
  	Moreover, using the disjoint squares $Q_\ell$ and $Q_\ell'$ of the proof of Claim 1 
  	(cf. also \cite[Lemma 4.3]{evequoz-weth-dual}) and the assumption \eqref{eqn:vanish_phi2}, 
  	we obtain
  	$$
  	\lim_{n \to \infty} \int_{\R^2} v_n [(1_{B_R} \Phi_1)*w_n]\,dx= 0 
  	\qquad \text{for every $R>0$.}
  	$$
  	Combining this with \eqref{eqn:phi1_mr}, the claim follows.

  	Arguing by contradiction, we obtain from Claim 1 and Claim 3 the conclusion of the
  	theorem for sequences $(v_n)_n\subset\cS(\R^2)$, bounded in the $L^{p'}$-norm. 
  	Since the bilinear form $\int_{\R^2} v\cR v\, dx$ is continuous on $L^{p'}(\R^2)$
  	and since the property \eqref{eqn:liminf} is stable under approximation
  	in the $L^{p'}(\R^2)$-norm, the conclusion follows by density.
  \end{proof}
  
  We end this paper by giving the proof of the part (b) of our second main result, showing
  the existence of a nontrivial solution pair for \eqref{eqn:nlh_integral}.
  \begin{proof}[Proof of Theorem~\ref{thm:exist}(b)]
  	Consider the set of paths 
  	$\Gamma=\{\gamma\in C([0,1],L^{p'}(\R^2))\, :\, \gamma(0)=0,\, J(\gamma(1))<0\}$ 
  	and the energy level
  	$$
  	c=\inf\limits_{\gamma\in\Gamma}\max\limits_{t\in[0,1]}J(\gamma(t)).
  	$$
  	Notice that $\Gamma\neq\varnothing$ and $c>0$, since $0$ is a strict local minimum
  	of $J$ and there are $v\in L^{p'}(\R^2)$ such that $J(v)<0$. Indeed, the proof
  	of Theorem \ref{thm:exist}(a) gives these facts without any decay assumption on $Q$. 
  	Using the standard deformation lemma (see \cite[Lemma 2.3]{willem}), 
  	we obtain the existence of a Palais-Smale sequence $(v_n)_n\subset L^{p'}(\R^2)$
  	such that $J(v_n)\to c$, as $n\to\infty$. By \eqref{eqn:bnd_PS}, this sequence is bounded and
  	therefore, as $n\to\infty$,
  	$$
  	\int_{\R^2}Q^\frac1pv_n\mR(Q^\frac1pv_n)\, dx
  	=\left(\frac1{p'}-\frac12\right)^{-1}\left(J(v_n)-\frac1{p'}J'(v_n)v_n\right)\to 
  	\frac{2p'c}{2-p'}>0.
  	$$
  	Consequently, Theorem~\ref{thm:nonvanishing}
  	gives $R, \zeta>0$ and $(x_n)_n\subset\R^2$ satisfying \eqref{eqn:liminf}, 
  	up to a subsequence. Making $R$ larger we may assume $x_n\in\Z^2$ for all $n$. 
  	Since $Q$ is $\Z^2$-periodic, the functional $J$ is invariant under $\Z^2$-translations and 
  	setting $w_n:=v_n(\cdot-x_n)$ we find that $(w_n)_n$ is also a bounded Palais-Smale 
  	sequence for $J$. Going to a subsequence, $w_n\weakto w\in L^{p'}(\R^2)$.
  	Moreover, if $\varphi\in C^\infty_c(\R^2)$,
  	\begin{align*}
  	&\left|\int_{\R^2} \left(|w_n|^{p'-2}w_n  -|w_m|^{p'-2}w_m\right)\varphi\, dx\right| \\
  	&\qquad\qquad\qquad\leq\|J'(w_n)-J'(w_m)\|_\ast\|\varphi\|_{p'} +\|1_B\mK (w_n-w_m)\|_p\|\varphi\|_{p'}\to 0,
  	\end{align*}
  	as $m,n\to\infty$, thanks to the compactness of $1_B\mK$, where $B\subset\R^2$ contains $\supp(\varphi)$.
  	Since $C^\infty_c(\R^2)$ is dense in $L^{p'}(\R^2)$, we infer that for all bounded and measurable $B\subset\R^2$,
  	$(1_B|w_n|^{p'-2}w_n)_n$ is a Cauchy sequence in $L^p(\R^2)$ and thus,
  	$$
  	1_B|w_n|^{p'-2}w_n\to 1_B|w|^{p'-2}w\  \text{ as }n\to\infty,\quad\text{ strongly in }L^p(\R^2).
  	$$
  	Recalling \eqref{eqn:liminf}, we see that
  	$$
  	\int_{B_R(0)}|w|^{p'}\, dx=\lim_{n\to\infty}\int_{B_R(0)}|w_n|^{p'}\, dx\geq\zeta>0,
  	$$
  	and consequently, $w\neq 0$. In addition, for all $\varphi\in C^\infty_c(\R^2)$, we obtain
  	\begin{align*}
  	J'(w)\varphi&=\int_{\R^2}|w|^{p'-2}w\varphi -\int_{\R^2}\varphi \mK w\,dx\\
  	& =\lim_{n\to\infty} \int_{\R^2}|w_n|^{p'-2}w_n\varphi -\int_{\R^2}\varphi \mK w_n\,dx
  	= \lim_{n\to\infty}J'(w_n)\varphi=0,
  	\end{align*}
  	and we conclude that $J'(w)=0$. 
  	Hence, $w$ is a nontrivial critical point of $J$ and, applying Lemma~\ref{lem:regularity},
  	the theorem follows.
  \end{proof}

\section*{Acknowledgements}
This research is supported by the Grant WE 2821/5-1 of the Deutsche Forschungsgemeinschaft (DFG).
The author would like to thank Tobias Weth for suggesting the study of the 2-dimensional case and
for his valuable remarks on a preliminary version of the manuscript.

\bibliographystyle{abbrv}
\bibliography{literatur.bib}

\begin{thebibliography}{10}

\bibitem{alama-li92}
S.~Alama and Y.~Y. Li.
\newblock Existence of solutions for semilinear elliptic equations with
  indefinite linear part.
\newblock {\em J. Differential Equations}, 96(1):89--115, 1992.

\bibitem{alsholm-schmidt70}
P.~Alsholm and G.~Schmidt.
\newblock Spectral and scattering theory for {S}chr\"odinger operators.
\newblock {\em Arch. Rational Mech. Anal.}, 40:281--311, 1970/1971.

\bibitem{ambrosetti-malchiodi-b}
A.~Ambrosetti and A.~Malchiodi.
\newblock {\em Nonlinear analysis and semilinear elliptic problems}, volume 104
  of {\em Cambridge Studies in Advanced Mathematics}.
\newblock Cambridge University Press, Cambridge, 2007.

\bibitem{ambrosetti-rabinowitz73}
A.~Ambrosetti and P.~H. Rabinowitz.
\newblock Dual variational methods in critical point theory and applications.
\newblock {\em J. Functional Analysis}, 14:349--381, 1973.

\bibitem{be_ga_ka83}
H.~Berestycki, T.~Gallou{\"e}t, and O.~Kavian.
\newblock \'{E}quations de champs scalaires euclidiens non lin\'eaires dans le
  plan.
\newblock {\em C. R. Acad. Sci. Paris S\'er. I Math.}, 297(5):307--310, 1983.

\bibitem{berestycki-lions83}
H.~Berestycki and P.-L. Lions.
\newblock Nonlinear scalar field equations. {I}. {E}xistence of a ground state.
\newblock {\em Arch. Rational Mech. Anal.}, 82(4):313--345, 1983.

\bibitem{berestycki-lions83b}
H.~Berestycki and P.-L. Lions.
\newblock Nonlinear scalar field equations. {II}. {E}xistence of infinitely
  many solutions.
\newblock {\em Arch. Rational Mech. Anal.}, 82(4):347--375, 1983.

\bibitem{colton-kress}
D.~Colton and R.~Kress.
\newblock {\em Inverse acoustic and electromagnetic scattering theory},
  volume~93 of {\em Applied Mathematical Sciences}.
\newblock Springer-Verlag, Berlin, 1992.

\bibitem{e-helmholtz-orlicz-compact}
G.~Ev{\'e}quoz.
\newblock A dual approach in {O}rlicz spaces for the nonlinear {H}elmholtz
  equation.
\newblock {\em Z. Angew. Math. Phys.}, 66:2995--3015, 2015.

\bibitem{evequoz-weth14}
G.~Ev{\'e}quoz and T.~Weth.
\newblock Real solutions to the nonlinear {H}elmholtz equation with local
  nonlinearity.
\newblock {\em Arch. Rat. Mech. Anal.}, 211(2):359--388, 2014.

\bibitem{evequoz-weth-dual}
G.~Ev{\'e}quoz and T.~Weth.
\newblock Dual variational methods and nonvanishing for the nonlinear
  {H}elmholtz equation.
\newblock {\em Adv. in Math.}, 280:690--728, 2015.

\bibitem{gelfand}
I.~M. Gel{$'$}fand and G.~E. Shilov.
\newblock {\em Generalized functions. {V}ol. 1}.
\newblock Academic Press [Harcourt Brace Jovanovich Publishers], New York, 1964
  [1977].
\newblock Properties and operations, Translated from the Russian by Eugene
  Saletan.

\bibitem{ghoussoub}
N.~Ghoussoub.
\newblock {\em Duality and perturbation methods in critical point theory},
  volume 107 of {\em Cambridge Tracts in Mathematics}.
\newblock Cambridge University Press, Cambridge, 1993.
\newblock With appendices by David Robinson.

\bibitem{gilbarg-trudinger}
D.~Gilbarg and N.~S. Trudinger.
\newblock {\em Elliptic partial differential equations of second order}.
\newblock Classics in Mathematics. Springer-Verlag, Berlin, 2001.
\newblock Reprint of the 1998 edition.

\bibitem{gutierrez04}
S.~Guti{\'e}rrez.
\newblock Non trivial {$L^q$} solutions to the {G}inzburg-{L}andau equation.
\newblock {\em Math. Ann.}, 328(1-2):1--25, 2004.

\bibitem{jalade04}
E.~Jalade.
\newblock Inverse problem for a nonlinear {H}elmholtz equation.
\newblock {\em Ann. Inst. H. Poincar\'e Anal. Non Lin\'eaire}, 21(4):517--531,
  2004.

\bibitem{kato59}
T.~Kato.
\newblock Growth properties of solutions of the reduced wave equation with a
  variable coefficient.
\newblock {\em Comm. Pure Appl. Math.}, 12:403--425, 1959.

\bibitem{KRS87}
C.~E. Kenig, A.~Ruiz, and C.~D. Sogge.
\newblock Uniform {S}obolev inequalities and unique continuation for second
  order constant coefficient differential operators.
\newblock {\em Duke Math. J.}, 55(2):329--347, 1987.

\bibitem{kuzin-pohozaev}
I.~Kuzin and S.~Pohozaev.
\newblock {\em Entire solutions of semilinear elliptic equations}.
\newblock Progress in Nonlinear Differential Equations and their Applications,
  33. Birkh\"auser Verlag, Basel, 1997.

\bibitem{lebedev}
N.~N. Lebedev.
\newblock {\em Special functions and their applications}.
\newblock Dover Publications Inc., New York, 1972.
\newblock Revised edition, translated from the Russian and edited by Richard A.
  Silverman, Unabridged and corrected republication.

\bibitem{lions1}
P.-L. Lions.
\newblock The concentration-compactness principle in the calculus of
  variations. {T}he locally compact case. {I, II}.
\newblock {\em Ann. Inst. H. Poincar\'e Anal. Non Lin\'eaire}, 1:109--145,
  223--283, 1984.

\bibitem{rabinowitz}
P.~H. Rabinowitz.
\newblock {\em Minimax methods in critical point theory with applications to
  differential equations}, volume~65 of {\em CBMS Regional Conference Series in
  Mathematics}.
\newblock Published for the Conference Board of the Mathematical Sciences,
  Washington, DC; by the American Mathematical Society, Providence, RI, 1986.

\bibitem{stein-weiss}
E.~M. Stein and G.~Weiss.
\newblock {\em Introduction to {F}ourier analysis on {E}uclidean spaces}.
\newblock Princeton University Press, Princeton, N.J., 1971.
\newblock Princeton Mathematical Series, No. 32.

\bibitem{struwe}
M.~Struwe.
\newblock {\em Variational methods. Applications to nonlinear partial
  differential equations and Hamiltonian systems}, volume~34 of {\em Ergebnisse
  der Mathematik und ihrer Grenzgebiete. 3. Folge. A Series of Modern Surveys
  in Mathematics}.
\newblock Springer-Verlag, Berlin, fourth edition, 2008.

\bibitem{tomas75}
P.~A. Tomas.
\newblock A restriction theorem for the {F}ourier transform.
\newblock {\em Bull. Amer. Math. Soc.}, 81:477--478, 1975.

\bibitem{willem}
M.~Willem.
\newblock {\em Minimax theorems}.
\newblock Progress in Nonlinear Differential Equations and their Applications,
  24. Birkh\"auser Boston Inc., Boston, MA, 1996.

\bibitem{zemach-odeh60}
C.~Zemach and F.~Odeh.
\newblock Uniqueness of radiative solutions to the {S}chroedinger wave
  equation.
\newblock {\em Arch. Rational Mech. Anal.}, 5:226--237, 1960.

\end{thebibliography}

\end{document}